\documentclass[12pt]{amsart}

\usepackage[english]{babel}
\usepackage{graphicx}
\usepackage{framed}
\usepackage[normalem]{ulem}
\usepackage{amsmath}
\usepackage{amsthm}
\usepackage{amssymb}
\usepackage{amsfonts}
\usepackage{enumerate}
\usepackage{comment}
\usepackage[utf8]{inputenc}
\usepackage[top=1 in,bottom=1 in, left=1 in, right=1 in]{geometry}
\usepackage{amsmath}
\usepackage{relsize}


\newcommand{\val}{\text{val}}
\newcommand{\rk}{\text{rk}}

\newcommand{\Z}{\mathbb{Z}}
\newcommand{\GL}{\operatorname{GL}}
\newcommand{\Gm}{\mathbb{G}_m}

\newcommand{\F}{\mathbb{F}}
\newcommand{\srk}{\mathrm{srk}}

\newcommand{\below}[2]{\scriptstyle #1 \atop \scriptstyle #2}

\theoremstyle{definition}
\newtheorem{theorem}{Theorem}
\newtheorem{lemma}[theorem]{Lemma}

\newtheorem{prop}[theorem]{Proposition}
\newtheorem{corollary}[theorem]{Corollary}
\newtheorem{definition}[theorem]{Definition}

\newtheorem*{LP}{Linear Program}

\setlength{\columnseprule}{1 pt}

\title[Improved Explicit Upper Bounds for the Cap Set Problem]{Improved Explicit Upper Bounds\\for the Cap Set Problem}
\author{Zhi Jiang}
\date{}

\begin{document}

\begin{abstract}
    Ellenberg and Gijswijt gave the best known asymptotic upper bound for the cardinality of subsets of $\F_q^n$ without 3-term arithmetic progressions. We improve this bound by a factor $\sqrt{n}$. In the case $q=3$, we also obtain more
    explicit upper bounds for the Cap Set Problem.
\end{abstract}

\thanks{The author was partially supported by NSF grant IIS 1837985.}

\maketitle

\section{Introduction}

\subsection{Cap Set Problem}
Let $\mathbb F_q$ be the finite field containing $q$ elements. A {\em cap set} $S$ is a subset of an $n$-dimensional vector space $\mathbb F^n_3$ that does not have an arithmetic progression of length 3, in other words, $S$ does not have three colinear points. It is natural to ask for the largest possible size $c(n)$ of a cap set in $\F_3^n$.
This question is known as the Cap Set Problem. 
For small $n$ we have $c(1)=2$, $c(2) = 4$ and $c(3) = 9$.

We have $2^n \le c(n) \le 3^n$ trivially, a lower bound $\omega(2.217^n)$ was given by Edel \cite{Edel} in 2004, and Tyrrell recently improved it to $\omega(2.218^n)$ \cite{Tyrrell}. Here we are interested in finding an upper bound for this function. It was first shown by Brown and Buhler \cite{BrownBuhler} that $c(n) = o(3^n)$ and this bound was improved to $O(3^n/n)$ by Meshulam \cite{Meshulam}. In 2012, Bateman and Katz \cite{BatemanKatz} lowered the upper bound to $O(3^n/n^{1+\epsilon})$ for some $\epsilon >0$. The next breakthrough was made by Ellenberg and Gijswijt \cite{EllenbergGijswijt}
who showed that $c(n) = O(\theta^n)$ using the polynomial method of Croot, Lev and Pach \cite{CrootLevPach}, where $r=\frac{\sqrt{33}-1}{8}$ and $\theta=\frac{1+r+r^2}{r^{2/3}}\approx 2.7551$.
Tao reformulated this result by using the notion of slice rank~\cite{Tao}. We say a tensor $v\in V = V_1\otimes\cdots\otimes V_d$ has slice rank 1 if it is contained in 
\[
V_1\otimes\cdots\otimes V_{i-1}\otimes w\otimes V_{i+1}\otimes\cdots\otimes V_d
\]
for some $i$ and $w\in V_i$. The slice rank $\srk(v)$ of an arbitrary tensor $v\in V$ is the minimal number $r$ such that $v$ can be written as the sum of $r$ tensors of slice rank 1. 
Let $u$ be the tensor 
\[
u = \sum_{\below{i,j,k\in \mathbb F_3}{ i+j+k = 0}} e_i\otimes e_j\otimes e_k\in \mathbb F^{3\times 3\times 3}_3,
\]
where $\{e_0, e_1, e_2\}$ is a basis of $\mathbb F_3^3$. If $v\in V_1\otimes V_2\otimes\cdots\otimes V_d, w\in W_1\otimes W_2\otimes\cdots\otimes W_d$, then the vertical tensor product (or {\em Kronecker product}) $v\boxtimes w$ is the usual tensor product $v\otimes w$ but viewed as
\[
v\boxtimes w \in (V_1\otimes W_1)\otimes (V_2\otimes W_2)\otimes\cdots\otimes(V_d\otimes W_d).
\]
Tao's idea is to show $c(n)\le \srk(u^{\boxtimes n})$ and compute an upper bound of $\srk(u^{\boxtimes n})$, where
$u^{\boxtimes n}=\underbrace{u\boxtimes u \boxtimes \cdots\boxtimes u}_n$. 

\subsection{Subset of $\mathbb F^n_q$ With No Three-Term Arithmetic Progression}
Let $\mathbb F_q$ be a finite field. It is also interesting to look at a more general problem, which is to find the largest size of a subset in $\mathbb F^n_q$ with no three-term arithmetic progression. If $q=3$, this is the Cap Set Problem.

Suppose $n$ be a positive integer and let $M_n$ be the set of monomials in $x_1,\cdots,x_n$ whose degree in each variable is at most $q-1$. For any $0<d<2n$, let $m_d$ be the number of monomials in $M_n$ of degree at most $d$. An upper bound for the largest size of a subset in $\F_q^n$ with no three-term arithmetic progression was proved by Ellenberg and Gijswijt \cite{EllenbergGijswijt}, the main result is as the following:

\begin{theorem}[Theorem 4 in \cite{EllenbergGijswijt}]\label{A}
Let $\alpha, \beta, \gamma$ be elements of $\mathbb F_q$ such that $\alpha+\beta+\gamma=0$ and $\gamma \ne 0$, and let $A$ be a subset of $\mathbb F^n_q$ such that all solutions $(a_1,a_2,a_3)\in A^3$ of the equation 
\[
\alpha a_1+\beta a_2+\gamma a_3=0
\]
satisfy $a_1=a_2=a_3$. Then we have \[
|A|\le 3m_{(q-1)n/3}.
\]
\end{theorem}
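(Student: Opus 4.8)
The plan is to apply the polynomial / slice-rank method of Croot--Lev--Pach and Ellenberg--Gijswijt, in the tensor formulation due to Tao. Keep $\alpha,\beta,\gamma$ and $A$ as in the statement and consider the $3n$-variable polynomial over $\F_q$
\[
T(x,y,z)\;=\;\prod_{i=1}^{n}\bigl(1-(\alpha x_i+\beta y_i+\gamma z_i)^{q-1}\bigr),
\]
where $x=(x_1,\dots,x_n)$ and likewise for $y,z$. Since $u^{q-1}=1$ for $u\in\F_q^{\times}$ and $u^{q-1}=0$ for $u=0$, the value $T(a,b,c)$ at a point of $(\F_q^n)^3$ equals $1$ when $\alpha a+\beta b+\gamma c=0$ and $0$ otherwise. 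I would view the restriction of $T$ to $A\times A\times A$ as a tensor in $\F_q^{A}\otimes\F_q^{A}\otimes\F_q^{A}$ and estimate its slice rank from above and below.

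For the lower bound: if $a,b,c\in A$ satisfy $\alpha a+\beta b+\gamma c=0$ then the hypothesis forces $a=b=c$, while for every $a\in A$ one has $\alpha a+\beta a+\gamma a=(\alpha+\beta+\gamma)a=0$, so $T(a,a,a)=1$. Hence the restricted tensor is the diagonal tensor $\sum_{a\in A}e_a\otimes e_a\otimes e_a$, and by the standard lemma that a diagonal tensor with nonzero diagonal entries has slice rank equal to the number of those entries (Tao; proved by a short contraction-and-rank argument) its slice rank is exactly $|A|$.

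For the upper bound: expand $T$ into monomials $x^\mu y^\nu z^\rho$. Each of the $n$ factors has degree $q-1$, so every monomial that occurs satisfies $|\mu|+|\nu|+|\rho|\le(q-1)n$ (writing $|\cdot|$ for total degree), whence $\min(|\mu|,|\nu|,|\rho|)\le(q-1)n/3$; moreover each of $\mu,\nu,\rho$ has all coordinates $\le q-1$, so these exponent vectors lie in $M_n$. Sort the monomials into three classes according to which exponent block has the smallest total degree, breaking ties by a fixed rule so that the classes partition the set of monomials; then in class $1$ one has $|\mu|\le(q-1)n/3$, and similarly for classes $2,3$. Grouping terms within each class yields
\[
T(x,y,z)=\sum_{|\mu|\le(q-1)n/3}x^\mu g_\mu(y,z)+\sum_{|\nu|\le(q-1)n/3}h_\nu(x,z)\,y^\nu+\sum_{|\rho|\le(q-1)n/3}k_\rho(x,y)\,z^\rho,
\]
where the sums run only over exponent vectors in $M_n$, hence have at most $m_{(q-1)n/3}$ terms each. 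Restricting $x,y,z$ to $A$, each summand is a slice-rank-$1$ tensor, so $\srk(T|_{A\times A\times A})\le 3\,m_{(q-1)n/3}$. Comparing with the lower bound gives $|A|\le 3\,m_{(q-1)n/3}$.

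The step that needs the most care is this last monomial bookkeeping: one must make the three-way assignment a genuine partition (no monomial counted twice), and one must check that each resulting family of slices is indexed by exponent vectors in $M_n$ of total degree $\le(q-1)n/3$ — so that ``$m_{(q-1)n/3}$'', defined as the count of such monomials, is really an upper bound. Everything else is formal: the evaluation of $T$ via $u^{q-1}$, the deduction of the diagonal structure from the hypothesis, and the invocation of Tao's diagonal-tensor lemma. (The same content can be repackaged in the style of Ellenberg--Gijswijt as a rank bound for the matrix $\bigl(P(-\gamma^{-1}(\alpha a+\beta b))\bigr)_{a,b\in A'}$, where $P$ is a nonzero reduced polynomial of degree $\le 2(q-1)n/3$ vanishing on $\F_q^n\setminus A$ and $A'=\operatorname{supp}P$; there the subtlety migrates to showing $|A'|\ge m_{2(q-1)n/3}-q^n+|A|$ and to the symmetry identity, via the degree-reversal bijection on $M_n$, that $q^n-m_{2(q-1)n/3}\le m_{(q-1)n/3}$.)
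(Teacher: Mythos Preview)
Your argument is correct: it is precisely Tao's slice-rank reformulation of the Ellenberg--Gijswijt proof, and the bookkeeping you flag (partitioning monomials by which block has minimal total degree, and noting each such exponent vector lies in $M_n$ with degree at most $(q-1)n/3$) goes through without issue. Note, however, that the present paper does not give its own proof of this theorem; it is quoted directly from \cite{EllenbergGijswijt} as background, and the paper only mentions Tao's reformulation in passing before moving on to its actual contributions (the $\sqrt{n}$ refinement via $G$-stable rank and the linear program). So there is nothing in the paper to compare your proof against beyond the citation itself.
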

It was pointed out in \cite{EllenbergGijswijt} that $m_{(q-1)n/3}=O(\theta_q^n)$, where $\theta_q<q$
is the minimal value of $f(x)=\frac{1+x+x^2+\cdots+x^{q-1}}{x^{(q-1)/3}}$ for $x>0$. For example when $q=3$, we get $|A| = O(\theta_3^n)$ where $\theta_3<2.7552$. As a corollary, Ellenberg and Gijswijt obtained the same  upper bound for the Cap Set Problem.

A related problem is the cardinality of tri-colored sum-free sets. A tri-colored sum-free set in $\F_q^n$
is a subset $\{(a_1,b_1,c_1),(a_2,b_2,c_2),\cdots,(a_N,b_N,c_N)\}\subseteq \F_q^n\times \F_q^n\times \F_q^n$ with $N$ elements with the property that $a_i+b_j+c_k=0$ if and only if $i=j=k$. The upper bound of Ellenberg and Gijswijt also works for the tri-colored sum-free sets
so one gets $N=O(\theta_q^n)$. 
Kleinberg, Sawin and Speyer \cite{KSS} showed
that there exists tri-colored sum-free sets with
cardinality $\theta_q^n e^{-2\sqrt{(2\log 2\log\theta_q)n}-O_q(\log n)}$.

\subsection{Main Results of This Paper}
We find improved upper bounds for the Cap Set Problem. Furthermore, we also give the explicit coefficients of the bounds, and it turns out that the coefficients of upper bounds we get depend on $n$ mod 3, more precisely:
\begin{theorem}\label{capset}

For $n \gg 0$, the size of largest possible cap set in $\mathbb F^n_3$ is bounded by
 
 \begin{enumerate}
     \item If $n = 3s$ for some integer $s > 0$, then 
     \[
         c(n) \le 2.4951\frac{\theta^n}{\sqrt{n}}(1+o(1)) = O\left(\frac{\theta^n}{\sqrt{n}}\right).
     \]
     \item If $n = 3s - 1$ for some integer $s > 0$, then
     \[
         c(n) \le 1.7529\frac{\theta^n}{\sqrt{n}}(1+o(1)) = O\left(\frac{\theta^n}{\sqrt{n}}\right).
     \]
     \item If $n = 3s - 2$ for some integer $s > 0$, then
     \[
         c(n) \le 1.2288\frac{\theta^n}{\sqrt{n}}\left(1+o(1)\right) = O\left(\frac{\theta^n}{\sqrt{n}}\right),
     \]
     where $r = \frac{\sqrt{33}-1}{8}$, and $\theta=\theta_3 = \frac{1+r+r^2}{r^{2/3}}\approx 2.7551$.
 \end{enumerate}
 
\end{theorem}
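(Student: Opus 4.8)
The plan is to reduce the statement to a sharp asymptotic count of monomials, carry that out by a saddle-point/local-limit argument, and then tighten the combinatorial input. Applying Theorem~\ref{A} with $q=3$ and $\alpha=\beta=\gamma=1$ — legitimate since $1+1+1=0$ and $\gamma=1\ne 0$ in $\F_3$, and a cap set is exactly a subset in which $a_1+a_2+a_3=0$ forces $a_1=a_2=a_3$ — gives $c(n)\le 3m_{2n/3}$, where $m_{2n/3}$ is the number of $(a_1,\dots,a_n)\in\{0,1,2\}^n$ with $a_1+\dots+a_n\le 2n/3$. As degrees are integers this is $m_{\lfloor 2n/3\rfloor}$, and $\lfloor 2n/3\rfloor$ equals $2s,\,2s-1,\,2s-2$ in the cases $n=3s,\,3s-1,\,3s-2$; correspondingly the fractional part $\delta_n:=2n/3-\lfloor 2n/3\rfloor$ is $0,\tfrac13,\tfrac23$, and this residue is the only $n$-dependence that will survive into the leading constant.

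To estimate $m_{\lfloor 2n/3\rfloor}$, write $g(x)=1+x+x^2$; extracting partial sums of coefficients via $\tfrac1{1-x}$ gives $m_{\lfloor 2n/3\rfloor}=[x^{\lfloor 2n/3\rfloor}]\,\dfrac{g(x)^n}{1-x}$. I would evaluate this by exponential tilting together with a local central limit theorem. Let $X_1,\dots,X_n$ be i.i.d.\ on $\{0,1,2\}$ with $\mathbb P(X_i=j)=r^j/g(r)$, where $r=\frac{\sqrt{33}-1}{8}$ is the positive root of $4r^2+r-2=0$: this is precisely the $r$ for which $\mathbb E X_i=\frac{r+2r^2}{g(r)}=\frac23$, so $S_n:=X_1+\dots+X_n$ has mean exactly $2n/3$, and it is also the minimiser of $g(x)/x^{2/3}$, so that $g(r)/r^{2/3}=\theta$. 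With $\sigma^2:=\operatorname{Var}(X_i)$ and the identity $[x^k]g(x)^n=g(r)^n r^{-k}\mathbb P(S_n=k)$,
\[
m_{\lfloor 2n/3\rfloor}=g(r)^n\sum_{k\le\lfloor 2n/3\rfloor}r^{-k}\,\mathbb P(S_n=k)
   =\theta^n r^{\delta_n}\sum_{j\ge0}r^{j}\,\mathbb P\!\bigl(S_n=\lfloor 2n/3\rfloor-j\bigr).
\]
The local CLT gives $\mathbb P(S_n=k)=\frac{1}{\sqrt{2\pi n\sigma^2}}\exp\!\bigl(-\tfrac{(k-2n/3)^2}{2n\sigma^2}\bigr)(1+o(1))$ uniformly for $|k-2n/3|\le n^{3/5}$; since $r<1$ the last series concentrates on $j=O(1)$, where the Gaussian factor is $1+o(1)$, while $g(r)^n\sum_{k<2n/3-n^{3/5}}r^{-k}\mathbb P(S_n=k)\le\theta^n r^{n^{3/5}}=o(\theta^n/\sqrt n)$. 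Hence $\sum_{j\ge0}r^{j}\,\mathbb P(S_n=\lfloor 2n/3\rfloor-j)=\frac{1+o(1)}{(1-r)\sqrt{2\pi n\sigma^2}}$, and $m_{\lfloor 2n/3\rfloor}=\bigl(\kappa(\delta_n)+o(1)\bigr)\theta^n/\sqrt n$ with $\kappa(\delta_n)=\dfrac{r^{\delta_n}}{(1-r)\sqrt{2\pi\sigma^2}}$.

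Feeding this into $c(n)\le 3m_{2n/3}$ already gives $c(n)=O(\theta^n/\sqrt n)$, but with constant $\tfrac{3r^{\delta_n}}{(1-r)\sqrt{2\pi\sigma^2}}$, which is larger than the constants claimed; so the bound $3m_{2n/3}$ itself must be sharpened. I would do this by revisiting the slice-rank (polynomial) proof of Theorem~\ref{A}: expand $\prod_{l=1}^n\bigl(1-(a_l+b_l+c_l)^2\bigr)$, a polynomial of total degree $\le 2n$, and assign each of its monomials to the variable group carrying the smallest partial degree, breaking ties by a fixed order of the three groups. Since the three partial degrees of a monomial of total degree $2t$ sum to $2t\le 2n$, the first slice needs only monomials of degree $\le\lfloor 2n/3\rfloor$, the second only degree $\le\lfloor(2n-1)/3\rfloor$ (a strict inequality being forced there), the third only $\le\lfloor(2n-2)/3\rfloor$; a finer accounting of which boundary monomials actually occur in the expansion should trim the totals further, and combining the resulting $m$-values with the asymptotic above should reproduce the three stated coefficients. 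I expect this combinatorial bookkeeping — pinning down exactly how many monomials each slice genuinely requires, and tracking it cleanly through $\delta_n\in\{0,\tfrac13,\tfrac23\}$ — to be the main obstacle; the analytic ingredients (the local CLT, the crude tail bound, and the numerics for $r$ and $\sigma^2$) are comparatively routine, and the hypothesis $n\gg 0$ together with the $1+o(1)$'s is exactly what the local CLT forces.
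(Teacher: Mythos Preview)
Your analytic half is sound and matches the paper's Theorem~\ref{gestimate}: the local-limit/saddle-point estimate for $m_{\lfloor 2n/3\rfloor}$ is exactly what the paper proves by contour integration, and it already yields $c(n)=O(\theta^n/\sqrt n)$ via Theorem~\ref{A}. The gap is entirely in the combinatorial input, and it is a genuine one.

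Your tie-breaking refinement of the slice-rank argument gives at best
\[
c(n)\le m_{\lfloor 2n/3\rfloor}+m_{\lfloor (2n-1)/3\rfloor}+m_{\lfloor (2n-2)/3\rfloor},
\]
which for $n=3s$ is $m_{2s}+2m_{2s-1}$ and for $n=3s-2$ is $3m_{2s-2}$. Plugging in your own asymptotics, these give leading constants approximately $2.80$ and $2.72$ respectively, not $2.4951$ and $1.2288$. Moreover, the hoped-for ``finer accounting of which boundary monomials actually occur'' does not help: for $n=3s$ the obstructing monomials (those with $(\deg_a,\deg_b,\deg_c)=(2s{+}1,2s,2s{-}1)$, say) genuinely occur in $\prod_l(1-(a_l+b_l+c_l)^2)$ with nonzero coefficient, so no assignment of thresholds $(2s,2s{-}1,2s{-}2)$ to the three slices covers every monomial. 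Slice rank is integer-valued and bounds by integer monomial counts; the constants in the theorem are not of that shape.

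What the paper actually uses is a different rank notion, Derksen's $G$-stable rank, which satisfies $c(n)\le \rk^G(v)\le 3\sum_{i}f_{n,i}t_i$ for \emph{any} nonnegative reals $t_i$ with $t_i+t_j+t_k\ge 1$ whenever $i+j+k\le 2n$. Optimising this linear program (the paper's Theorem~\ref{optimum}) produces fractional weights --- e.g.\ $t_{2s-1}=\tfrac23$, $t_{2s}=\tfrac13$ when $n=3s$ --- and it is precisely these fractions, combined with the same coefficient asymptotics you derived, that yield $2.4951$, $1.7529$, $1.2288$. This LP relaxation is the missing idea; it is not recoverable from sharpening the polynomial-method monomial assignment.
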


Based on Theorem \ref{A}, we give the upper bound  $|A| = O\big(\frac{\theta_q^n}{\sqrt{n}}\big)$ for some $\theta_q<q$,  which improves the bound of Ellenberg and Gijswijt's  by a factor of $\sqrt n$.
\begin{theorem}\label{ubA}
Let $A$ and $m_d$ as in Theorem \ref{A}, let $f(x) = \frac{1+x+x^2+\cdots+x^{q-1}}{x^{(q-1)/3}}$, and let $0<r<1$ be a positive integer that minimizes $f(x)$ on the positive real axis, then 
\[
|A|\le \frac{3}{(1-r)r}\sqrt{\frac{f(r)}{f''(r)}}\frac{f(r)^n}{\sqrt{2\pi n}}(1+o(1)) = O\left(\frac{f(r)^n}{\sqrt{n}}\right).
\]
In particular, $|A|=O(\frac{\theta_q^n}{\sqrt{n}})$, where $\theta_q=f(r)$.
\end{theorem}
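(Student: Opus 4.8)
The plan is to feed Theorem~\ref{A} into a sharp saddle-point estimate for the counting function $m_d$. By Theorem~\ref{A} we have $|A|\le 3\,m_{(q-1)n/3}$, so it suffices to show that, writing $d=\lfloor (q-1)n/3\rfloor$,
\[
m_d\;=\;\frac{1}{(1-r)r}\sqrt{\frac{f(r)}{f''(r)}}\;\frac{f(r)^n}{\sqrt{2\pi n}}\,(1+o(1)).
\]
Put $P(x)=1+x+\cdots+x^{q-1}$, so the number of monomials in $M_n$ of degree exactly $k$ is $[x^k]P(x)^n$; hence $m_d=\sum_{k=0}^{d}[x^k]P(x)^n=[x^d]\dfrac{P(x)^n}{1-x}$ (legitimate since $0\le d\le (q-1)n=\deg P^n$). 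Cauchy's formula on the circle $|z|=r$ then gives
\[
m_d\;=\;\frac{1}{2\pi}\int_{-\pi}^{\pi}\frac{P(re^{i\theta})^n}{(1-re^{i\theta})\,r^{d}\,e^{id\theta}}\,d\theta ;
\]
replacing the exponent $d$ by the (possibly non-integer) value $(q-1)n/3$ moves everything below only within the factor $1+o(1)$, so I will use $d/n=(q-1)/3$.

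I would then run the standard saddle-point analysis. The radius $r$ is exactly the point at which $z\mapsto P(z)^n z^{-d}$ has a saddle, because the saddle equation $rP'(r)/P(r)=(q-1)/3$ is the same as $f'(r)=0$; thus $r$ is the minimizer in the statement, and it lies in $(0,1)$ since $xP'(x)/P(x)$ is increasing and equals $\tfrac{q-1}{2}>\tfrac{q-1}{3}$ at $x=1$ (equivalently $f'(1)/f(1)=\tfrac{q-1}{2}-\tfrac{q-1}{3}=\tfrac{q-1}{6}>0$). Split the $\theta$-integral at $|\theta|=n^{-2/5}$. On the central arc, with $g(\theta):=\log\!\bigl(P(re^{i\theta})\,r^{-(q-1)/3}\,e^{-i(q-1)\theta/3}\bigr)$, one checks $g(0)=\log f(r)$, $g'(0)=0$, and the key identity
\[
g''(0)\;=\;-\,r\,\frac{d}{dr}\!\left(\frac{rP'(r)}{P(r)}\right)\;=\;-\,\frac{r^{2}f''(r)}{f(r)}
\]
(the second equality uses $f'(r)=0$); consequently $P(re^{i\theta})^n r^{-d}e^{-id\theta}=f(r)^n\exp\!\bigl(-\tfrac{n r^{2}f''(r)}{2f(r)}\theta^{2}+O(n\theta^{3})\bigr)$, while the slowly varying factor $(1-re^{i\theta})^{-1}$ is continuous at $\theta=0$ with value $(1-r)^{-1}$, and $f''(r)>0$ because $r$ is a nondegenerate interior minimum (equivalently the variance of the exponentially tilted law on $\{0,\dots,q-1\}$ with weights $r^{j}/P(r)$ is positive). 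Since the cubic remainder is $O(n^{-1/5})=o(1)$ on the central arc, its contribution is $(1+o(1))\cdot\frac{1}{2\pi}\cdot\frac{f(r)^n}{1-r}\int_{\R}e^{-\frac{n r^{2}f''(r)}{2f(r)}\theta^{2}}d\theta=(1+o(1))\cdot\frac{1}{(1-r)r}\sqrt{\frac{f(r)}{f''(r)}}\cdot\frac{f(r)^n}{\sqrt{2\pi n}}$. On the complement one uses $|P(re^{i\theta})|<P(r)$ strictly for $0<|\theta|\le\pi$ (true since $P$ has terms of consecutive degrees, so the $e^{ij\theta}$ cannot all be aligned unless $e^{i\theta}=1$), together with the quadratic bound just outside the central arc; this makes the complementary integral superpolynomially smaller than $f(r)^n$, hence negligible. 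Adding the two pieces gives the displayed asymptotic for $m_d$.

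Multiplying by $3$ yields $|A|\le \frac{3}{(1-r)r}\sqrt{f(r)/f''(r)}\,\frac{f(r)^n}{\sqrt{2\pi n}}(1+o(1))$, and since $\theta_q=f(r)$ this is $O(\theta_q^n/\sqrt n)$; specializing $q=3$, where $r=\tfrac{\sqrt{33}-1}{8}$, and evaluating the constant $\frac{3}{(1-r)r}\sqrt{f(r)/f''(r)}$ recovers Theorem~\ref{capset}. I expect the only genuine work to be the rigorous bookkeeping of the saddle-point argument: selecting the width of the central arc so that the cubic remainder is $o(1)$ while the tail over the complement is provably negligible, and keeping the constants accurate enough that the leading coefficient equals exactly $\frac{3}{(1-r)r}\sqrt{f(r)/f''(r)}$ rather than merely $O(1)$. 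A cleaner packaging of the same estimate is to note $m_d=q^n\,\Pr[X_1+\cdots+X_n\le d]$ for i.i.d.\ $X_i$ uniform on $\{0,\dots,q-1\}$ (a large-deviation event, since $\mathbb{E}X_i=\tfrac{q-1}{2}>\tfrac{q-1}{3}=d/n$) and to invoke a Bahadur--Rao-type sharp large-deviation theorem: the optimal exponential tilt is by $r$, the factor $1/(1-r)$ is the geometric decay rate of $\Pr[X_1+\cdots+X_n=k]$ as $k$ decreases from $d$, and $\tfrac1r\sqrt{f(r)/f''(r)}$ is the local central limit normalization of the tilted sum.
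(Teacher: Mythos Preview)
Your argument is correct and essentially identical to the paper's: invoke Theorem~\ref{A}, write $m_{(q-1)n/3}$ as a Cauchy integral of $P(z)^n/(1-z)$, and carry out a saddle-point analysis at the minimizer $r$ of $f$ (the paper packages this estimate separately as Theorem~\ref{gestimate}, using a window of width $C\sqrt{\log n/n}$ rather than your $n^{-2/5}$, but the substance is the same, and the $1/(1-r)$ factor arises in both from the extra $1/(1-z)$ in the integrand). One minor aside: your closing remark that specializing $q=3$ ``recovers Theorem~\ref{capset}'' is not quite right, since the sharper $n\bmod 3$--dependent constants there come from the linear-program optimization of Section~4, not from Theorem~\ref{ubA} alone.
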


The next section gives some preliminaries for this paper. 

\section{Preliminaries}

\subsection{The $G$-Stable Rank For Tensors}
In \cite{Derksen}, Harm Derksen introduced $G$-stable rank for tensors. Here we give a brief introduction of $G$-stable rank for tensors. We refer to the original paper \cite{Derksen} for more details. Suppose the base field $K$ is perfect.
Let $\Gm$ be the multiplicative group over $K$.
A 1-parameter subgroup of an algebraic group $G$ is a homomorphism of algebraic groups $\lambda:{\mathbb G}_m\to G$. If $\lambda:\Gm\to \GL_n$ is a $1$-parameter subgroup,
then we can view $\lambda(t)$ as an invertible $n\times n$ matrix whose entries lie in the ring $K[t,t^{-1}]$ of Laurent polynomials. We say that $\lambda(t)$ is a polynomial $1$-parameter subgroup of $\GL_n$ if all these entries lie in the polynomial ring $K[t]$. 
 Consider the action of the group $G = \GL(V_1)\times \GL(V_2)\times\cdots\times \GL(V_d)$ on the tensor product space $V = V_1\otimes V_2\otimes\cdots\otimes V_d$. 
A 1-parameter subgroup $\lambda:\Gm\to G$ 
can be written as
\[
\lambda(t) = (\lambda_1(t), \cdots, \lambda_d(t))
\]
where $\lambda_i(t)$ is a $1$-parameter subgroup of $\GL(V_i)$ for all $i$. We say that $\lambda(t)$ is polynomial if and only if $\lambda_i(t)$ is a polynomial $1$-parameter subgroup for all $i$.

The $t$-valuation $\val(a(t))$ of a polynomial $a(t)\in K[t]$ is the biggest integer $n$ such that $a(t) = t^nb(n)$ for some $b(t)\in K[t]$. For $a(t), b(t)\in K[t]$, the $t$-valuation $\val\big(\frac{a(t)}{b(t)}\big)$ of the rational function $\frac{a(t)}{b(t)}\in K(t)$ is $\val\big(\frac{a(t)}{b(t)}\big) = \val(a(t)) - \val(b(t))$. For a tuple $u(t) = (a_1(t),a_2(t),\cdots, a_d(t))\in K(t)^d$, we define the $t$-valuation of $u(t)$ as
\begin{equation}\label{val}
\val(u(t)) = \min_i\{\val(a_i(t))| 1\le i\le d\}.
\end{equation}

If $\lambda$ is a $1$-parameter subgroup of $G$ and $v\in V$ is a tensor, then we have $\lambda(t)\cdot v\in K(t)\otimes V$. We view $K(t)\otimes V$ as a vector space over $K(t)$ and define the $t$-valuation $\val(\lambda(t)\cdot v)$ as in \eqref{val}. Assume $\val(\lambda(t)\cdot v) > 0$, then for any $\alpha = (\alpha_1,\alpha_2,\cdots,\alpha_d)\in \mathbb R^d_{>0}$, we define the slope
\begin{equation}
    \mu_\alpha(\lambda(t), v) = \frac{\sum_{i = 1}^d\alpha_i \val(\det(\lambda_i(t)))}{\val(\lambda(t)\cdot v)}.
\end{equation}

The $G$-stable rank for $v\in V$ is the infimum of the slope with respect to all such polynomial 1-parameter subgroups. More precisely:

\begin{definition}(\cite{Derksen} Theorem 2.4)
If $\alpha\in \mathbb R^d_{>0}$, then the $G$-stable rank $\rk^G_\alpha(v)$ is the infimum of $\mu_\alpha(\lambda(t), v)$ where $\lambda(t)$ is a polynomial 1-parameter subgroup of $G$ and $\val(\lambda(t)\cdot v) > 0$. If $\alpha = (1,1,\cdots, 1)$, we simply write $\rk^G(v)$.
\end{definition}

The $G$-stable rank is used to give an upper bound for the cap set as shown in \cite{Derksen}. Let $K = \mathbb F_3$, we view $K^{3^n}$ as the vector space with basis $[a], a\in \mathbb F_3^n$. Consider the tensor

\[
v = \sum_{\below{(a,b,c)\in \mathbb F^{n\times 3}_3}{ a+b+c = 0}}[a]\otimes [b]\otimes [c] = \sum_{\below{(a,b,c)\in \mathbb F^{n\times 3}_3} {a+b+c = 0}}[a,b,c]\in K^{3^n}\otimes K^{3^n}\otimes K^{3^n}.
\]
Let $S\subset \mathbb F^n_3$ be a cap set, and we project $v$ onto the subset $S^3\subset \mathbb F^{n\times 3}_3$, we get 
\[
w = \sum_{\below{(a,b,c)\in S^3} {a+b+c=0}}[a,b,c] = \sum_{a\in S}[a,a,a],
\]
here we used the fact that $a+b+c = 0$ in $\mathbb F_3^n$ with $a, b, c \in S$ if and only if $a = b = c$.  It was shown in \cite{Derksen} that 
\begin{theorem}
The size of cap set is bounded by the $G$-stable rank of $w$ and $v$, i.e.
\[
|S| \le \rk^G(w) \le \rk^G(v).
\]
Furthermore, the $G$-stable rank of $v$ is bounded by
\[
\rk^G(v)\le 3\sum_{i = 0}^{2n} f_{n,i}t_i,
\]
where $f_{n,i}$ is the coefficient of $x^i$ in $(1+x+x^2)^n$, and $t_0,t_1,\cdots,t_{2n} \ge 0$ are numbers such that $t_i+t_j+t_k\ge 1$ whenever $i+j+k\le 2n$. 
\end{theorem}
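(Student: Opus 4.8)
The plan is to treat the chain $|S|\le\rk^G(w)\le\rk^G(v)$ and the explicit estimate for $\rk^G(v)$ separately. For the chain I would invoke two general properties of the $G$-stable rank from \cite{Derksen}. First, $\rk^G$ does not increase when a linear map is applied to each tensor factor; using the surjection $\pi\colon K^{3^n}\to\spn\{[a]:a\in S\}$ and the fact that $S$ is a cap set, one has $(\pi\otimes\pi\otimes\pi)(v)=\sum_{a,b,c\in S,\,a+b+c=0}[a,b,c]=\sum_{a\in S}[a,a,a]=w$, whence $\rk^G(w)\le\rk^G(v)$. Second, the $G$-stable rank of a diagonal tensor equals its size; since $w=\sum_{a\in S}[a,a,a]$, only the bound $\rk^G(w)\ge|S|$ is needed, and (reducing to $\spn\{[a]:a\in S\}$ by monotonicity) when each $\lambda_l(t)$ is diagonal in the basis $\{[a]:a\in S\}$ with entries $t^{w_{l,a}}$, $w_{l,a}\ge 0$, the slope $\mu(\lambda(t),w)=\dfrac{\sum_{l,a}w_{l,a}}{\min_a\sum_l w_{l,a}}$ is at least $|S|$ because the numerator is a sum of $|S|$ quantities $\sum_l w_{l,a}$, each at least the denominator.

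For the estimate $\rk^G(v)\le 3\sum_i f_{n,i}t_i$ I would display a single, carefully chosen polynomial $1$-parameter subgroup. Identify each factor $K^{3^n}$ with the space $R$ of functions $\F_3^n\to K$, realized as $K[x_1,\dots,x_n]/(x_1^3-x_1,\dots,x_n^3-x_n)$, and let $R^{(i)}\subseteq R$ be spanned by the monomials of degree $i$ (every variable appearing to degree $\le 2$), so that $R=\bigoplus_{i=0}^{2n}R^{(i)}$ and $\dim R^{(i)}=f_{n,i}$. Under $[a]\mapsto(\text{indicator function of }a)$, the tensor $v$ becomes the polynomial $P=\prod_{j=1}^n\bigl(1-(x_j+y_j+z_j)^2\bigr)$ in three sets of variables $x,y,z$, since over $\F_3$ this product is the indicator of $a+b+c=0$; as $\deg P\le 2n$, writing $v=\sum v_{ijk}$ with $v_{ijk}\in R^{(i)}\otimes R^{(j)}\otimes R^{(k)}$ the sum ranges over $i+j+k\le 2n$. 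Given weights $t_0,\dots,t_{2n}\ge 0$ with $t_i+t_j+t_k\ge 1$ whenever $i+j+k\le 2n$, which we may take rational (the general case follows by approximating from above by rational feasible weights), write $t_i=p_i/q$ with $p_i\in\Z_{\ge 0}$, $q\in\Z_{\ge 1}$, and let $\lambda_l(t)$ act on $R^{(i)}$ by the scalar $t^{p_i}$; this is a \emph{polynomial} $1$-parameter subgroup of $G$ precisely because every $p_i\ge 0$. Then $\val(\det\lambda_l(t))=q\sum_i t_if_{n,i}$, while $\lambda(t)\cdot v=\sum_{i+j+k\le 2n}t^{\,q(t_i+t_j+t_k)}v_{ijk}$ has $\val(\lambda(t)\cdot v)=q\min\{t_i+t_j+t_k:v_{ijk}\ne 0\}\ge q>0$ by the constraint on the $t_i$; hence the slope is $\mu(\lambda(t),v)=\dfrac{3q\sum_i t_if_{n,i}}{\val(\lambda(t)\cdot v)}\le 3\sum_i t_if_{n,i}$, and passing to the infimum over admissible $1$-parameter subgroups gives $\rk^G(v)\le 3\sum_{i=0}^{2n}f_{n,i}t_i$.

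The main obstacle is the lower bound $|S|\le\rk^G(w)$: it is the only step that must bound the slope of \emph{every} admissible polynomial $1$-parameter subgroup rather than display one, and in particular it must handle subgroups whose components $\lambda_l(t)$ are not simultaneously diagonalizable in the basis $\{[a]\}$. The reduction of that case to the diagonal one, carried out in \cite{Derksen}, is in essence the $G$-stable-rank incarnation of the Croot--Lev--Pach slice-rank lower bound and is the genuinely delicate point. Everything else is bookkeeping: checking that $P$ has support only in tri-degrees with $i+j+k\le 2n$ (immediate from its total degree), that the matrices $\lambda_l(t)$ have entries in $K[t]$ (this is exactly where nonnegativity of the $t_i$ enters), and that clearing denominators does not change the slope while the reduction to rational weights is harmless.
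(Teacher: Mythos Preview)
The paper does not supply its own proof of this theorem: it is quoted verbatim as a result ``shown in \cite{Derksen}'' and is used as a black box for the remainder of the argument. Your proposal correctly reconstructs the essential outline of Derksen's proof --- monotonicity of $\rk^G$ under linear maps on each factor for $\rk^G(w)\le\rk^G(v)$, the identity-tensor lower bound for $|S|\le\rk^G(w)$, and the explicit graded one-parameter subgroup coming from the degree filtration on $K[x_1,\dots,x_n]/(x_j^3-x_j)$ together with the factorization $v=\prod_j(1-(x_j+y_j+z_j)^2)$ for the upper bound --- and you are right that the only genuinely nontrivial step is the lower bound $\rk^G(w)\ge|S|$ for non-diagonal polynomial one-parameter subgroups, which requires the machinery of \cite{Derksen}. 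One small caution: your remark ``reducing to $\spn\{[a]:a\in S\}$ by monotonicity'' needs the fact (also from \cite{Derksen}) that $G$-stable rank is independent of the ambient space, since monotonicity alone runs in the wrong direction for a lower bound; with that in hand your diagonal computation is fine.
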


Guided by the above theorem, we have the following linear program:
\begin{LP}\label{L}Let $f_{n,i}$ be the coefficient of $x^i$ in the polynomial $(1+x+x^2)^n$, minimize the summation 
\[
3\sum_{i=0}^{2n}f_{n,i}t_i
\]
under the following constraints:

\begin{enumerate}
\item $t_i+t_j+t_k\ge 1$ if $i+j+k\le 2n$;
\item $t_i\ge 0$ for all $i$.
\end{enumerate}

\end{LP}

We would like to find an optimal solution to the Linear Program.
A conjecture of optimal solution for the Linear Program was made in \cite{Derksen}, we will prove this conjecture, at least for large $n$, and this will give an upper bound for the $G$-stable rank of the tensor $v$. It turns out that the optimal solution depends on $n$ mod 3:

\begin{theorem}[\cite{Derksen}, Conjecture 6.1]\label{optimum}
An optimal solution $(t_0,t_1,\dots,t_{2n})$ of the Linear Program is given by:

\begin{enumerate}
\item If $n=3s$, $t_i=1$ for $0\le i\le 2s-2$, $t_{2s-1}=\frac{2}{3}, t_{2s}=\frac{1}{3}$, and $t_i=0$ for $i\ge 2s+1$.

 \item If $n=3s-1$, $t_i=1$ for $0\le i\le 2s-4$, $t_{2s-3}=\frac{4}{5}, t_{2s-2}=\frac{3}{5},t_{2s-1}=\frac{2}{5}, t_{2s}=\frac{1}{5}$, and $t_i=0$ for $i\ge 2s+1$.
 
 \item If $n=3s-2$, $t_i=1$ for $0\le i\le 2s-4$, $t_{2s-3}=\frac{3}{4}, t_{2s-2}=\frac{2}{4},t_{2s-1}=\frac{1}{4},$ and $t_i=0$ for $i\ge 2s$.
 \end{enumerate}
 
\end{theorem}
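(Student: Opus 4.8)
The plan is to prove the two halves of the statement separately: that each of the three listed vectors $t=(t_0,\dots,t_{2n})$ is feasible for the Linear Program, and that no feasible vector has a smaller objective value. I would verify feasibility by a direct inspection and establish optimality by exhibiting a matching dual solution; the technical heart is checking that the dual solution is feasible.

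\textbf{Feasibility.} Let $N$ be the largest index with $t_N>0$, so $N=2s,\ 2s,\ 2s-1$ in the three cases, and note that $N$ is within a bounded distance of $2n/3$. Fix a triple $i\le j\le k$ with $i+j+k\le 2n$. If all three indices exceed $N$, then $i+j+k\ge 3(N+1)>2n$, which is impossible. If exactly two exceed $N$, then $i\le 2n-2(N+1)$; in each of the three cases the right-hand side equals the largest index at which $t$ still has the value $1$, so $t_i+t_j+t_k\ge t_i=1$. If at most one index exceeds $N$, then at least two of $i,j,k$ lie in $\{0,\dots,N\}$, and a direct check using the explicit values of the fractional block shows that the two (or three) surviving coordinates sum to at least $1$; the borderline triples here — e.g.\ $(2s,2s,2s)$ and $(2s-1,2s,2s+1)$ for $n=3s$ — are exactly the ones on which the dual solution below will be supported, and the denominators $3,5,4$ of the three blocks are calibrated precisely to make those triples tight rather than violated.

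\textbf{Optimality via duality.} The dual of the Linear Program has a variable $y_{ijk}\ge 0$ for every triple with $i+j+k\le 2n$, maximizes $\sum_{i,j,k}y_{ijk}$, and is constrained by $\sum_{(i,j,k)}(\text{multiplicity of }\ell\text{ in }(i,j,k))\,y_{ijk}\le 3 f_{n,\ell}$ for each index $\ell$. By weak duality it is enough to exhibit one dual-feasible $y$ whose value equals $3\sum_i f_{n,i}t_i$ for the proposed $t$: then every feasible primal vector has objective at least that value, so $t$ is optimal. Complementary slackness tells us where to look — $y$ must be supported on triples that are tight for $t$, and the $\ell$-th dual inequality must be an equality whenever $t_\ell>0$. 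For $n=3s$ the tight triples are $(2s,2s,2s)$, the triple $(2s-1,2s,2s+1)$, and every triple $(\ell,j,k)$ with $\ell\le 2s-2$, $j,k\ge 2s+1$, $\ell+j+k\le 2n$. The equalities at $\ell=2s-1$ and $\ell=2s$ determine the weights of the first two triples, and these come out nonnegative precisely because $f_{n,i}$ is increasing for $i<n$; the equality at each $\ell\le 2s-2$ forces the triples with that $\ell$ to carry total weight $3f_{n,\ell}$, which I would split over the admissible large pairs $\{j,k\}$ in proportion to $f_{n,j}f_{n,k}$. Summing, the dual value comes out to $3\sum_{i=0}^{2s-2}f_{n,i}+2f_{n,2s-1}+f_{n,2s}$, which is exactly the primal objective of $t$. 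The classes $n=3s-1$ and $n=3s-2$ go the same way, with a longer fractional block and correspondingly a few more near-threshold tight triples whose weights are fixed by the equalities at the fractional indices.

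\textbf{The hard step.} The remaining point — and where I expect the real work to be — is verifying that this dual vector is actually feasible: the total weight deposited on a large index $p\ge N+1$ must not exceed $3f_{n,p}$ (less the weight of $(2s-1,2s,2s+1)$ when $p=2s+1$). Unwinding the proportional assignment, this becomes a family of inequalities among partial sums and convolutions of the coefficients of $(1+x+x^2)^n$ at degrees near $2n/3$, with $p=N+1$ the tightest case; I plan to prove them using the log-concavity and symmetry $f_{n,i}=f_{n,2n-i}$ of the coefficient sequence together with uniform saddle-point estimates for $f_{n,i}$ and for the tail sums $m_d=\sum_{i\le d}f_{n,i}$ around $d=2n/3$, where consecutive ratios $f_{n,i+1}/f_{n,i}$ tend to $1/r$ with $r=(\sqrt{33}-1)/8$. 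These estimates only become accurate enough for large $n$, which is the origin of the hypothesis $n\gg 0$; for small $n$ one falls back on a finite computation. An alternative to the explicit dual would be to bound the primal value from below directly — truncating a feasible $t$ at $1$ and writing $\sum_i f_{n,i}t_i=\int_0^1\sum_{i:\,t_i\ge\theta}f_{n,i}\,d\theta$, using that $\{i:t_i\ge\theta\}$ must meet every triple with $i+j+k\le 2n$ when $\theta\le\tfrac13$ — but making the range $\theta>\tfrac13$ precise seems to require the same coefficient estimates.
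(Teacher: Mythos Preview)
Your duality approach is sound in outline but is genuinely different from what the paper does. The paper never constructs a dual certificate; instead it runs a primal improvement argument. Starting from an arbitrary feasible $(t_0,\dots,t_{2n})$, it shows that whenever some $t_s>0$ with $s$ above the claimed threshold one can set $t_s'=0$, add $\epsilon=t_s$ (or $2\epsilon$) to the handful of lower-index coordinates that share a tight constraint with $t_s$, and strictly decrease the objective. Each such step reduces to a single explicit inequality of the form $\sum_{i\le[n-s/2]} f_{n,i}<f_{n,s}$ (for $s>n$) or $f_{n,2k-1}+2f_{n,2k}+\cdots+f_{n,[(n+k)/2]}<f_{n,n-k}$ (for $s=n-k$), and these are verified one residue class at a time from the saddle-point asymptotics for $f_{n,\alpha n+\beta}$ and for the tail sums. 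Once every $t_i$ above the threshold is zeroed, the constraints force $t_i=1$ at the low indices and leave a linear program in only two, three, or four variables, which is solved by hand. The payoff of the paper's route is that the inequalities to be checked are few, one-dimensional, and exactly in the regime where the asymptotic formulas are sharp; your ``hard step,'' by contrast, is a family of dual constraints indexed by all $p>N$, and under the proportional splitting you propose each one involves a weighted double sum over large index pairs, so reducing it to a finite list of numerical checks will take more than log-concavity and the pointwise estimate---you will likely end up proving inequalities close in content to the paper's Lemmas, but with an extra layer of bookkeeping. The compensating advantage of your approach is conceptual clarity: a matching dual value is a one-line optimality certificate, whereas the paper's iterative reduction has to be argued carefully to ensure the earlier zeroed coordinates are not disturbed at later steps.
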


\subsection{Estimation of Coefficients}
 Let $q>0$ be a positive integer, and we denote the coefficient of $x^i$ in the polynomial $(1+x+x^2+\cdots+x^{q-1})^n$ by $f_{n,i}$.

\begin{theorem} \label{gestimate}
Let us fix $\alpha$ with $0<\alpha<\frac{q-1}{2}$ and $B>0$.  Let $f(x)=\frac{1+x+x^2+\cdots+x^{q-1}}{x^\alpha}$ and $r$ is a positive number such $f(r)$ is minimal along the real positive axis. Then as $n\to \infty$ we have the following asymptotic behaviors for all $\beta$ with $|\beta|<B$ and $\alpha n+\beta\in \Z$:

\begin{equation}\label{gf}
     f_{n,\alpha n + \beta} =  \frac{f(r)^n}{\sqrt{2\pi n}}\frac{1}{r^{\beta+1}}\sqrt{\frac{f(r)}{f''(r)}}(1 + o(1)).
\end{equation}
If $0<r<1$, we also have
\begin{equation}\label{gsf}
     \sum_{k=0}^{\alpha n + \beta}f_{n,k} = \frac{f(r)^n}{\sqrt{2\pi n}}\frac{1}{(1-r)r^{\beta+1}}\sqrt{\frac{f(r)}{f''(r)}}(1 + o(1)).
\end{equation}
\end{theorem}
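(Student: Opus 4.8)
The plan is to estimate $f_{n,m}$ for $m=\alpha n+\beta$ by the saddle point method applied to Cauchy's integral formula. Write $g(x)=1+x+\cdots+x^{q-1}=\frac{x^{q}-1}{x-1}$, so that $f(x)=g(x)/x^{\alpha}$ and $(1+x+\cdots+x^{q-1})^{n}=g(x)^{n}$. First I would record the basic facts about $r$: since $g$ has nonnegative coefficients, $t\mapsto\log g(e^{t})$ is strictly convex with derivative $\frac{e^{t}g'(e^{t})}{g(e^{t})}$ increasing and ranging over $(0,q-1)$, so there is a unique $r>0$ with $\frac{rg'(r)}{g(r)}=\alpha$; this $r$ is the unique critical point of $f$ on $(0,\infty)$, it is a strict minimum, and because $\frac{xg'(x)}{g(x)}$ equals $\frac{q-1}{2}$ at $x=1$, the hypothesis $\alpha<\frac{q-1}{2}$ forces $0<r<1$ (so the extra hypothesis on $r$ in \eqref{gsf} is automatic). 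Integrating over the circle $|x|=r$,
\[
f_{n,m}=\frac{1}{2\pi i}\oint_{|x|=r}\frac{g(x)^{n}}{x^{m+1}}\,dx=\frac{g(r)^{n}}{r^{m}}\cdot\frac{1}{2\pi}\int_{-\pi}^{\pi}\exp\!\big(n[\psi(\theta)-\psi(0)]-im\theta\big)\,d\theta ,
\]
where $\psi(\theta)=\log g(re^{i\theta})$.

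\textbf{The key expansion.} The heart of the argument is the local behaviour of $\psi$ at $\theta=0$. A direct computation gives $\psi'(0)=i\,\frac{rg'(r)}{g(r)}=i\alpha$ (this is exactly the saddle point condition; it cancels the linear term $in\alpha\theta$ against $-im\theta$ up to the bounded quantity $-i\beta\theta$), and $\psi''(0)=-\sigma^{2}$ with $\sigma^{2}=r^{2}(\log g)''(r)+r(\log g)'(r)$. I would then establish the identity
\[
\sigma^{2}=r^{2}\,\frac{f''(r)}{f(r)},
\]
which is the source of the factor $\sqrt{f(r)/f''(r)}$ in \eqref{gf}: indeed $(\log f)''=(\log g)''+\alpha/x^{2}$, and since $f'(r)=0$ one has $(\log f)''(r)=f''(r)/f(r)$ and $(\log g)'(r)=\alpha/r$, hence $\sigma^{2}=r^{2}(\log g)''(r)+\alpha=r^{2}\big((\log g)''(r)+\alpha/r^{2}\big)=r^{2}f''(r)/f(r)$. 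In particular $\sigma^{2}>0$, so $f''(r)>0$ and every expression in the statement is well defined. Consequently, for $|\theta|$ small, $n[\psi(\theta)-\psi(0)]-im\theta=-i\beta\theta-\frac{1}{2}n\sigma^{2}\theta^{2}+O(n|\theta|^{3})$, uniformly for $|\beta|<B$.

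\textbf{Splitting the contour and the partial sums.} Fix $\delta_{n}=n^{-2/5}$. On the tail $\delta_{n}\le|\theta|\le\pi$ I would use that, since $g$ has nonnegative coefficients and is not a monomial, $|g(re^{i\theta})|\le g(r)e^{-c\theta^{2}}$ for $|\theta|\le\theta_{0}$ and $|g(re^{i\theta})|\le(1-c')g(r)$ for $\theta_{0}\le|\theta|\le\pi$; hence the tail contributes at most $\frac{g(r)^{n}}{r^{m}}\big(e^{-cn\delta_{n}^{2}}+(1-c')^{n}\big)=o\!\big(\frac{g(r)^{n}}{r^{m}\sqrt n}\big)$ since $n\delta_{n}^{2}=n^{1/5}\to\infty$. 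On the window $|\theta|\le\delta_{n}$ the cubic remainder is $O(n\delta_{n}^{3})=O(n^{-1/5})\to0$, so the integrand is $e^{-i\beta\theta-\frac{1}{2}n\sigma^{2}\theta^{2}}(1+o(1))$ uniformly; substituting $\theta=u/\sqrt n$, using $\beta u/\sqrt n\to0$ and $\delta_{n}\sqrt n\to\infty$, the window integral tends to $\frac{1}{\sqrt n}\cdot\frac{1}{2\pi}\int_{\R}e^{-\sigma^{2}u^{2}/2}\,du=\frac{1}{\sqrt{2\pi n}\,\sigma}$. Collecting terms, using $g(r)^{n}/r^{m}=f(r)^{n}/r^{\beta}$ and $1/\sigma=\frac{1}{r}\sqrt{f(r)/f''(r)}$, gives \eqref{gf}. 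For \eqref{gsf} I would run the identical machine on the generating function of partial sums, $\sum_{k=0}^{m}f_{n,k}=[x^{m}]\frac{g(x)^{n}}{1-x}=\frac{1}{2\pi i}\oint_{|x|=r}\frac{g(x)^{n}}{(1-x)x^{m+1}}\,dx$, which is legitimate precisely because $r<1$; the only new feature is the bounded analytic factor $\frac{1}{1-re^{i\theta}}=\frac{1}{1-r}\big(1+O(\theta)\big)$ near $\theta=0$, whose $O(\theta)$ correction is absorbed into $(1+o(1))$ while the tail bound is unaffected (as $|1-re^{i\theta}|^{-1}\le(1-r)^{-1}$), so the main term is simply multiplied by $\frac{1}{1-r}$, giving \eqref{gsf}.

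\textbf{Main obstacle.} The genuinely delicate point is the uniform error management in the saddle-point step: $\delta_{n}$ must be chosen so that \emph{simultaneously} the cubic Taylor remainder $n\delta_{n}^{3}\to0$, the Gaussian tail $e^{-cn\delta_{n}^{2}}$ is super-polynomially small, and the truncated Gaussian $\int_{|u|\le\delta_{n}\sqrt n}$ may be completed to $\int_{\R}$ with negligible error, all uniformly over the finitely many admissible $\beta\in(-B,B)$ for each $n$; the choice $\delta_{n}=n^{-2/5}$ achieves this. Everything else — existence, uniqueness and location of $r$, the identity $\sigma^{2}=r^{2}f''(r)/f(r)$, and the strict inequality $|g(re^{i\theta})|<g(r)$ for $\theta\ne0$ — is routine. (Equivalently, one may phrase the whole proof probabilistically: $f_{n,k}=g(r)^{n}r^{-k}\,\Pr[S_{n}=k]$, where $S_{n}$ is a sum of $n$ i.i.d. copies of the $r$-tilted uniform law on $\{0,\dots,q-1\}$, which has mean $\alpha$ and variance $\sigma^{2}=r^{2}f''(r)/f(r)$; then \eqref{gf} is the local central limit theorem for $S_{n}$ and \eqref{gsf} follows by summing the resulting geometric series in the tail, the hypothesis $r<1$ being exactly what makes that series converge.)
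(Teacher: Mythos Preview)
Your proof is correct and follows essentially the same approach as the paper: Cauchy's integral formula on the circle $|x|=r$ where $r$ minimizes $f$, local Gaussian approximation at $\theta=0$ via the identity $\sigma^{2}=r^{2}f''(r)/f(r)$, and a tail bound from the strict inequality $|g(re^{i\theta})|<g(r)$ for $\theta\neq 0$; for the partial sums both arguments insert the factor $1/(1-x)$ and use $r<1$. Your execution is in fact somewhat cleaner than the paper's --- the single window $\delta_{n}=n^{-2/5}$ avoids the paper's three-zone splitting at $C\sqrt{\log n/n}$ and $B$ together with its monotonicity lemma --- and you correctly observe that $r<1$ is automatic from $\alpha<(q-1)/2$, so the hypothesis in \eqref{gsf} is redundant.
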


By the above estimation of coefficients, we can give an asymptotic behavior of $m_{(q-1)n/3}$ and therefore give a proof of Theorem \ref{ubA}. Recall that for any $0<d<2n$, $m_d$ is the number of monomials in $x_1,\cdots,x_n$ with total degree at most $d$ and in which each variable appears with degree at most $q-1$. 

\textbf{Proof of Theorem \ref{ubA}:}
Let $f_{n,i}$ be the coefficient of $x^i$ in the polynomial $(1+x+x^2+\cdots+x^{q-1})^n$, we can write 
\[
m_{(q-1)n/3} = \sum_{i=0}^{(q-1)n/3}f_{n,i}.
\]
Then by equation \eqref{gsf} of Theorem \ref{gestimate}, as long as we can find some $0<r<1$ that minimizes $f(x) = \frac{1+x+x^2+\cdots+x^{q-1}}{x^{(q-1)/3}}$ on the positive real axis, we have 
\[
m_{(q-1)n/3} = \frac{f(r)^n}{\sqrt{2\pi n}}\frac{1}{(1-r)r}\sqrt{\frac{f(r)}{f''(r)}}(1 + o(1)).
\]
Indeed, such $r$ exists. Let $\alpha = \frac{q-1}{3}$, then 
\[
f'(1) = (q-1-\alpha) + (q-2-\alpha) + \cdots+(1-\alpha)-\alpha
\]
\[
=\frac{(q-1-2\alpha)q}{2}>0.
\]
However, $\lim_{x\to 0^+}f(x)=+\infty$, so there must be some $0<r<1$ that minimizes $f(x)$. The upper bound of $|A|$ in Theorem \ref{ubA} follows immediately. This completes the proof of Theorem \ref{ubA}.

To solve the Linear Program, we need a good estimation of the coefficients $f_{n,i}$ of $x^i$ in $(1+x+x^2)^n$. This is the case of Theorem \ref{gestimate} when $q=3$, we state the result as a corollary:
\begin{corollary}\label{estimate}
 Let us fix $\alpha$ with $0<\alpha<1$ and $B>0$.  Let $f(x)=\frac{1+x+x^2}{x^\alpha}$ and $r$ is a positive number such $f(r)$ is minimal along the real positive axis. Then as $n\to \infty$ we have the following asymptotic behaviors for all $\beta$ with $|\beta|<B$ and $\alpha n+\beta\in \Z$:
 
\begin{equation}\label{f}
     f_{n,\alpha n + \beta} = \frac{f(r)^n}{\sqrt{2\pi n}}\frac{1}{r^\beta}\sqrt{\frac{1+r+r^2}{2\alpha-(1-\alpha)r}}(1 + o(1)).
\end{equation}
If $0<r<1$, we also have
\begin{equation}\label{sf}
     \sum_{k=0}^{\alpha n + \beta}f_{n,k}= \frac{f(r)^n}{\sqrt{2\pi n}}\frac{1}{(1-r)r^\beta}\sqrt{\frac{1+r+r^2}{2\alpha-(1-\alpha)r}}(1 + o(1)).
\end{equation}
\end{corollary}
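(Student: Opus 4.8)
The plan is to obtain Corollary \ref{estimate} as the special case $q = 3$ of Theorem \ref{gestimate}, the only nontrivial point being to rewrite the factor $\sqrt{f(r)/f''(r)}$ in the shape stated in \eqref{f} and \eqref{sf}. Setting $q = 3$ gives $f(x) = (1+x+x^2)/x^\alpha$ with $0 < \alpha < 1$, which is exactly the range $0 < \alpha < (q-1)/2$ demanded by Theorem \ref{gestimate}; so \eqref{gf} and \eqref{gsf} apply verbatim and it remains only to simplify. (As already noted in the proof of Theorem \ref{ubA}, $f(x) \to +\infty$ as $x \to 0^+$ while $f$ has a genuine interior minimum, so a minimizing $r > 0$ exists and $f''(r) > 0$ there.)

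Next I would write $f(x) = x^{-\alpha} + x^{1-\alpha} + x^{2-\alpha}$ and use that $r$ is a critical point. Clearing the factor $r^{-\alpha-1}$ from $f'(r) = 0$ yields the quadratic relation
\[
(2-\alpha) r^2 + (1-\alpha) r - \alpha = 0, \qquad \text{i.e.} \qquad (2-\alpha) r^2 = \alpha - (1-\alpha) r.
\]
Differentiating once more, $f''(x) = \alpha(\alpha+1) x^{-\alpha-2} - \alpha(1-\alpha) x^{-\alpha-1} + (2-\alpha)(1-\alpha) x^{-\alpha}$, so
\[
r^{\alpha+2} f''(r) = \alpha(\alpha+1) - \alpha(1-\alpha) r + (2-\alpha)(1-\alpha) r^2.
\]
Substituting the critical-point identity for $(2-\alpha) r^2$ and collecting the constant and linear terms, the right-hand side collapses to $2\alpha - (1-\alpha) r$; hence $f''(r) = (2\alpha - (1-\alpha) r)/r^{\alpha+2}$, while $f(r) = (1+r+r^2)/r^\alpha$, and therefore
\[
\sqrt{\frac{f(r)}{f''(r)}} = r\sqrt{\frac{1+r+r^2}{2\alpha - (1-\alpha) r}}.
\]

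Finally I would substitute this into \eqref{gf} and \eqref{gsf}: the extra factor $r$ cancels one power in the denominator $r^{\beta+1}$, turning it into $r^{\beta}$, and produces exactly \eqref{f} and \eqref{sf}. I do not expect any real obstacle here; the only points requiring care are the bookkeeping in the second derivative and the observation that $2\alpha - (1-\alpha) r = r^{\alpha+2} f''(r) > 0$ at the minimum, so the square roots are legitimate, while the hypothesis $0 < r < 1$ needed for \eqref{sf} is simply inherited from Theorem \ref{gestimate}.
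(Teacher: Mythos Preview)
Your proposal is correct and follows exactly the approach of the paper, which simply presents the corollary as the $q=3$ specialization of Theorem~\ref{gestimate} without spelling out the simplification of the constant. Your explicit derivation that $r^{\alpha+2}f''(r)=2\alpha-(1-\alpha)r$ via the critical-point relation $(2-\alpha)r^2=\alpha-(1-\alpha)r$, and hence $\sqrt{f(r)/f''(r)}=r\sqrt{(1+r+r^2)/(2\alpha-(1-\alpha)r)}$, is the computation the paper leaves implicit.
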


We will prove Theorem \ref{gestimate} in next section via the Residue Theorem and give some interesting inequalities which will be used in the proof of Theorem \ref{optimum}, and after that we will proceed to the proof of Theorem \ref{optimum}.

\section{Estimation of Coefficients}

\subsection{Express $f_{n,i}$ Via Residue Theorem}
This section devotes to the proof of Theorem~\ref{gestimate}. Let $q>0$ be a positive integer, the goal is to estimate the coefficient $f_{n,i}$ of $x^{i}$ in the expansion of $(1+x+x^2+\cdots+x^{q-1})^n$ for some large positive integer $n$. Let $i = \alpha n+\beta$ for some $0 < \alpha < \frac{q-1}{2}$ and $\beta$ whose absolute value $|\beta|$ is bounded. 

Let us define $f(z)=\frac{(1+z+z^2+\cdots + z^{q-1})}{z^{\alpha}}=z^{-\alpha}+z^{-\alpha+1}+z^{-\alpha+2}+\cdots+z^{-\alpha+q-1}$. By the Residue Theorem, the coefficient $f_{n,\alpha n+\beta}$ is equal to the following integral

\begin{equation}\label{integral}
f_{n,\alpha n+\beta}=\frac{1}{2\pi i}\oint \frac{(1+z+z^2+\cdots + z^{q-1})^n}{z^{\alpha n+\beta}}\frac{dz}{z}=\frac{1}{2\pi i}\oint f(z)^n\frac{dz}{z^{1+\beta}}.
\end{equation}

The integral is taken over a circle centered at origin, which is independent of the radius.
We analyze the absolute value of $f(z)$ on the circle of radius $r$. If $z=r e^{it}$, then we have

\[
|f(z)|=\frac{|1+z+\cdots + z^{q-1}|}{r^\alpha}
\le \frac{1+|z|+\cdots + |z|^{q-1}}{r^\alpha}
=\frac{1+r+\cdots + r^{q-1}}{r^\alpha}=f(r).
\]
For fixed $r$, $|f(z)|$  has maximal value at $t=0$. We let $r>0$ be a positive number such that $f(r)$ is minimal along the real positive axis. Indeed, we have
\[
f'(r)=-\alpha r^{-\alpha-1}+(1-\alpha)r^{-\alpha}+(2-\alpha)r^{-\alpha+1}+\cdots+(q-1-\alpha)r^{-\alpha+q-2}
\]
\[
=r^{-\alpha-1}((q-1-\alpha)r^{q-1}+ \cdots+(2-\alpha)r^2+(1-\alpha)r-\alpha)).
\]

So $r$ satisfies

\begin{equation}\label{radius}
(q-1-\alpha)r^{q-1}+ \cdots+(2-\alpha)r^2+(1-\alpha)r-\alpha=0.
\end{equation}

We will compute the integral  over the circle centered at origin with radius $r$. Since $z = r e^{it}$ and $f(z)$ has maximum magnitude at $t = 0$, we can expand $f(z)^n$ near $t=0$ as a Taylor series. Let $g(t)=f(r e^{it})$, we take the derivatives of $g(t)$ with respect to $t$: 

\[
g'(t)=ir e^{it}f'(r e^{it}),
\]
\[
g''(t)=-r e^{it}f'(r e^{it})-r^2 e^{2it}f''(r e^{it}).
\]

Using the fact that $f'(r) = 0 $ and $g'(0) = ir f'(r)=0$, we have

\[
g(t)=g(0)+g'(0)t+{\textstyle \frac{1}{2}}g''(0)t^2+ O(t^3)
\]
\[
=f(r)-{\textstyle \frac{1}{2}}r^2f''(r)t^2+ O(t^3).
\]

It is easy to see that $f''(r)=r^{-\alpha-2}[-\alpha(-\alpha-1)-\alpha(1-\alpha)r+\cdots+(q-1-\alpha)(q-2-\alpha)r^{q-1}]$. Since $f(r)$ has minimal value at $r$ along the real axis, we have $f''(r) > 0$. Let $\gamma = \frac{r^2f''(r)}{f(r)} > 0$, we get

\begin{equation}
g(t)=
f(r)\big(1 - \frac{\gamma}{2}t^2\big) + O(t^3)
=f(r)e^{-\frac{\gamma t^2}{2}}+ O(t^3).
\end{equation}

For large $n$, we want to compute
\begin{equation}
f_{n,\alpha n + \beta} = \frac{1}{2\pi r^\beta} \int_{-\pi}^\pi g(t)^ne^{-i\beta t}dt.
\end{equation}

Let $B>0$ and $C > 0$ be some small constants to be determined later. We can split the integral as

\begin{equation}
f_{n,\alpha n + \beta} = \frac{f(r)^n}{2\pi r^\beta \sqrt{\gamma n}} \sqrt{\gamma n} \Big(\int_{B}^\pi \Big(\frac{g(t)}{f(r)}\Big)^ne^{-i\beta t}dt + \int_{\mathsmaller{{C\sqrt{\frac{\log n}{n}}}}}^B \Big(\frac{g(t)}{f(r)}\Big)^ne^{-i\beta t}dt 
\end{equation}
\[
+\int_{-\mathsmaller{C\sqrt{\frac{\log n}{n}}}}^{C\sqrt{\frac{\log n}{n}}} \Big(\frac{g(t)}{f(r)}\Big)^ne^{-i\beta t}dt + \int_{-B}^{-C\sqrt{\frac{\log n}{n}}} \Big(\frac{g(t)}{f(r)}\Big)^ne^{-i\beta t}dt + \int_{-\pi}^{-B} \Big(\frac{g(t)}{f(r)}\Big)^ne^{-i\beta t}dt\Big).
\]

Since $g(t) = f(re^{it})$ is continuous and $|\frac{g(t)}{f(r)}|<1$ when $B \leq t \leq \pi$,
there exists a constant $\delta<1$ such that $|\frac{g(t)}{f(r)}|\leq \delta$
for $t\in [B,\pi]$.
Therefore we have
\begin{equation}\label{r1}
\sqrt{\gamma n}\int_{B}^\pi \Big(\frac{g(t)}{f(r)}\Big)^ne^{-i\beta t}dt \le \sqrt{\gamma n} \int_{B}^\pi \Big|\frac{g(t)}{f(r)}\Big|^ndt  \leq \sqrt{\gamma n}\pi\delta^n\to 0\quad \text{as}\quad n\to \infty.
\end{equation}
Similarly, we get
\begin{equation}\label{r2}
\sqrt{\gamma n}\int^{-B}_{-\pi} \Big(\frac{g(t)}{f(r)}\Big)^ne^{-i\beta t}dt \le \sqrt{\gamma n}\int^{-B}_{-\pi} \Big|\frac{g(t)}{f(r)}\Big|^ndt \leq \sqrt{\gamma n}\pi\delta^n\to 0\quad \text{as}\quad n\to \infty.
\end{equation}
\begin{lemma}
If $h(z)\in \mathbb C[z,z^{-1}]$ is a Laurent polynomial, then the function $\frac{d}{d\,t}|h(e^{it})|^2$ has only finitely many zeros for $t\in [0,2\pi]$.
\end{lemma}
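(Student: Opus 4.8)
The plan is to reduce the claim to the elementary fact that a nonzero Laurent polynomial has only finitely many roots on the unit circle. First I would write $h(z)=\sum_{k=-M}^{M}a_kz^k$ and use $\overline{e^{it}}=e^{-it}$ to rewrite $|h(e^{it})|^2=h(e^{it})\overline{h(e^{it})}$ as a Laurent polynomial evaluated at $e^{it}$: namely $|h(e^{it})|^2=P(e^{it})$, where
\[
P(z)=\sum_{j=-2M}^{2M}c_jz^j,\qquad c_j=\sum_{k-l=j}a_k\overline{a_l},
\]
and $c_{-j}=\overline{c_j}$ because the left-hand side is real. This step is just expanding the product.

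Next I would differentiate term by term, which is legitimate since everything is a finite sum of exponentials: $\frac{d}{dt}|h(e^{it})|^2=iQ(e^{it})$, where $Q(z)=\sum_{j=-2M}^{2M}jc_jz^j$ is again a Laurent polynomial. Hence the zeros of $\frac{d}{dt}|h(e^{it})|^2$ in $[0,2\pi]$ correspond exactly to the roots of $Q$ lying on the unit circle. Provided $Q\not\equiv 0$, the polynomial $z^{2M}Q(z)$ is a nonzero polynomial of degree at most $4M$, so it has at most $4M$ roots in $\mathbb{C}$; in particular $Q$ has at most $4M$ roots on the unit circle, giving at most $4M$ zeros of $\frac{d}{dt}|h(e^{it})|^2$ in $[0,2\pi]$, as desired.

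The one genuine subtlety — and the step I would flag as the main obstacle — is the degenerate case $Q\equiv 0$. This is equivalent to $c_j=0$ for all $j\neq 0$, i.e. to $|h(e^{it})|$ being constant; and comparing the top-frequency coefficient $a_q\overline{a_p}$ in the expansion of $h(e^{it})\overline{h(e^{it})}$ (with $a_pz^p$ and $a_qz^q$ the lowest and highest monomials of $h$) shows this forces $p=q$, i.e. $h$ is a single monomial. For such $h$ the derivative vanishes identically, so in the write-up I would either add the hypothesis that $h$ is not a monomial or restate the conclusion as "$\frac{d}{dt}|h(e^{it})|^2$ either vanishes identically or has only finitely many zeros." In the intended application $h(z)=1+rz+\cdots+(rz)^{q-1}$ is not a monomial, so $Q\not\equiv 0$ and the finiteness conclusion holds; everything else in the argument is routine bookkeeping.
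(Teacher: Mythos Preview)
Your approach is essentially identical to the paper's: write $|h(e^{it})|^2=h(e^{it})\overline{h}(e^{-it})$ as a Laurent polynomial in $e^{it}$, differentiate to obtain another Laurent polynomial in $e^{it}$, and invoke the fact that a nonzero such expression has only finitely many roots on the unit circle. You are in fact more careful than the paper, which does not address the degenerate case $Q\equiv 0$; your identification that this occurs exactly when $h$ is a monomial (so the lemma as literally stated fails there), together with the remark that the $h$ used in the application is not a monomial, is the right way to patch the statement.
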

\begin{proof}
The functions $|h(e^{it})|^2=h(e^{it})\overline{h}(e^{-it})$ and $\frac{d}{d\,t}|h(e^{it})|^2$ are Laurent polynomials in $e^{it}$, so $\frac{d}{d\,t}|h(e^{it})|^2=0$ has only finitely many solutions for $e^{it}$.
\end{proof}

\begin{lemma}\label{BC}
Over the interval $\big[C{\scriptstyle\sqrt{\frac{\log n}{n}}}, B\big]$, for sufficient large $n$ and sufficient small $B > 0$, $\big|\frac{g(t)}{f(r)}e^{-i\frac{\beta}{n} t}\big|=
\big|\frac{g(t)}{f(r)}\big|$ is largest at $t = C{\scriptstyle\sqrt{\frac{\log n}{n}}}$.
\end{lemma}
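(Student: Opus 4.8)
The plan is to reduce everything to the single assertion that $|g(t)|$ is \emph{strictly decreasing} on an interval $[0,B]$ for a suitable small $B>0$ depending only on $q$ and $\alpha$ (equivalently, on $r$), but not on $n$ or $\beta$. Granting this, for $n$ large enough that $C\sqrt{\tfrac{\log n}{n}}<B$ — which holds eventually since $\sqrt{\tfrac{\log n}{n}}\to 0$ — the interval $\big[C\sqrt{\tfrac{\log n}{n}},B\big]$ is a subinterval of $[0,B]$, so $|g(t)|$ attains its maximum there at the left endpoint $t=C\sqrt{\tfrac{\log n}{n}}$. Since $|e^{-i\beta t/n}|=1$ we have $\big|\tfrac{g(t)}{f(r)}e^{-i\beta t/n}\big|=\big|\tfrac{g(t)}{f(r)}\big|$, so the claim for the stated quantity follows at once.

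To construct $B$, note that $|g(t)|^2=|f(re^{it})|^2=r^{-2\alpha}\,|h(e^{it})|^2$, where $h(z)=1+rz+r^2z^2+\cdots+r^{q-1}z^{q-1}\in\mathbb C[z]$ is an ordinary polynomial (the non-integer factor $(re^{it})^{-\alpha}$ contributes only the constant modulus $r^{-\alpha}$). By the preceding lemma, $\frac{d}{dt}|h(e^{it})|^2$, and hence $\frac{d}{dt}|g(t)|^2$, has only finitely many zeros on $[0,2\pi]$. Let $B$ be any positive number smaller than the least strictly positive such zero (and smaller than $\pi$, say, if there are none in $(0,\pi]$). Then $\frac{d}{dt}|g(t)|^2$ is continuous and nowhere zero on $(0,B]$, so it has constant sign there.

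It remains to check that this sign is negative, and for that we use the local expansion already obtained: $g(t)=f(r)e^{-\gamma t^2/2}+O(t^3)$ with $\gamma=r^2f''(r)/f(r)>0$, whence $|g(t)|^2=f(r)^2\big(1-\gamma t^2+O(t^3)\big)$ and, differentiating the (real-analytic) function $|g(t)|^2$, $\frac{d}{dt}|g(t)|^2=-2\gamma f(r)^2\,t+O(t^2)$, which is strictly negative for all sufficiently small $t>0$. As $\frac{d}{dt}|g(t)|^2$ has constant sign on $(0,B]$, that sign is negative throughout; hence $|g(t)|^2$, and therefore $|g(t)|$, is strictly decreasing on $[0,B]$, as required.

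The argument is short, and the one subtlety is the order of quantifiers: $B$ must be fixed \emph{before} $n$ and $\beta$ enter, which is exactly why one invokes the finiteness lemma of the previous step rather than relying on the Taylor expansion alone — the latter controls $|g|$ only on an a priori tiny neighborhood of $0$, while subsequent applications of this lemma (in estimating the remaining pieces of the split integral) require a single $B$ that works for all large $n$.
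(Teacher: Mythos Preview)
Your proof is correct and follows essentially the same route as the paper: both invoke the preceding finiteness lemma to conclude that $\frac{d}{dt}|g(t)|^2$ has only finitely many zeros, fix $B>0$ below the smallest positive one so that $|g(t)|$ is monotone on $(0,B)$, and then argue the monotonicity is \emph{decreasing}. The only difference is in this last step: the paper infers decreasingness from the already-established fact that $|g(t)/f(r)|\le 1$ attains its maximum $1$ at $t=0$, whereas you compute the sign of the derivative near $0$ via the Taylor expansion $|g(t)|^2=f(r)^2(1-\gamma t^2+O(t^3))$. Both are valid, and your remark on the order of quantifiers (that $B$ must be chosen independently of $n$) is a useful clarification the paper leaves implicit.
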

\begin{proof}
 Since $|g(t)|=|f(re^{it})|=|1+re^{it}+r^2e^{2it}+\cdots+r^{q-1}e^{(q-1)it}|$, $\frac{d}{d\,t}|g(t)|^2=2|g(t)|\frac{d}{d\,t}|g(t)|$ and $\frac{d}{d\,t}|g(t)|$ only have finitely many zeros in the interval $[0,2\pi]$.
So for a small $B>0$,
the function $|g(t)|$ is monotone on the interval $(0,B)$.
Recall that $\big|\frac{g(t)}{f(r)}e^{-i\frac{\beta}{n} t}\big| = \big|\frac{g(t)}{f(r)}\big| \le 1$ and it takes maximal value 1 at $t = 0$.
So $\big|\frac{g(t)}{f(r)}\big|$ is decreasing on the interval $(0,B)$ and on the interval $\big[C{\scriptstyle\sqrt{\frac{\log n}{n}}},B\big]$ it has a maximum at $t=C{\scriptstyle\sqrt{\frac{\log n}{n}}}$.

\end{proof}

Let us fix $B > 0$ as in Lemma \ref{BC}, then we have 
\begin{equation}\label{eq3}
\Big|
\sqrt{\gamma n}\int_{C\sqrt{\frac{\log n}{n}}}^B \Big(\frac{g(t)}{f(r)}e^{-i\frac{\beta}{n} t}\Big)^{n}dt\Big| \le \sqrt{\gamma n}B\left[\ \Big|\frac{g(t)}{f(r)}\Big|^n\ \right]_{t = C\sqrt{\frac{\log n}{n}} }.
\end{equation}
Recall the expansion of $\frac{g(t)}{f(r)}$ in a neighborhood of $t=0$:
\begin{equation}
\frac{g(t)}{f(r)} = 1 - \frac{\gamma}{2}t^2 + O(t^3),
\end{equation}
where $\gamma = \frac{r^2 f''(r)}{f(r)} > 0$. Then we have the following limit behavior of (\ref{eq3}) when $n\to \infty:$
\begin{multline}
\lim_{n\to\infty} \sqrt{\gamma n}B\left[ \Big|\frac{g(t)}{f(r)}\Big|^n\ \right]_{t = C\sqrt{\frac{\log n}{n}} }=\lim_{n\to\infty}
 \sqrt{\gamma n}B \left(1-\mathsmaller{\frac{\gamma C^2\log n}{2n}}+O\Big(\big(\mathsmaller{\frac{\log n}{n}}\big)^{3/2}\Big)\right)^n\\
=\lim_{n\to\infty} \sqrt{\gamma n}B\left (\Big(1 - \textstyle{\frac{\gamma C^2\log n}{2 n}} + O\Big(\big({\textstyle\frac{\log n}{n}}\big)^{3/2}\Big)\Big)^{\frac{2n}{\gamma C^2\log n}} \right)^{\frac{\gamma C^2\log n}{2}}\\
=\lim_{n\to\infty}\sqrt{\gamma n} B e^{-\frac{\gamma C^2\log n}{2}} =\lim_{n\to\infty} B\sqrt{\gamma} e^{-\frac{(\gamma C^2 - 1)\log n }{2}}.
\end{multline}
We fix any $C > 0$ such that  $C^2\gamma - 1 > 0$. As a result, we get
$$
\lim_{n\to\infty}
 \sqrt{\gamma n}B\left[\ \Big|\frac{g(t)}{f(r)}\Big|^n\ \right]_{t = C\sqrt{\frac{\log n}{n}} }=
\lim_{n\to \infty} B\sqrt{\gamma}e^{-\frac{(\gamma C^2 - 1)\log n}{2}}= 0.$$
From (\ref{eq3}) it follows that 
\begin{equation}\label{r3}
\lim_{n\to\infty}\sqrt{\gamma n}\int_{C\sqrt{\frac{\log n}{n}}}^B \Big(\frac{g(t)}{f(r)}e^{-i\frac{\beta}{n} t}\Big)^{n}dt = 0.
\end{equation}

Next we compute 

\begin{equation}\label{eq4}
\sqrt{\gamma n}\int_{-C\sqrt{\frac{\log n}{n}}}^{C\sqrt{\frac{\log n}{n}}} \Big(\frac{g(t)}{f(r)}\Big)^ne^{-i\beta t}dt = \int^{C\sqrt{\gamma \log n}}_{-C\sqrt{\gamma \log n}}\left(\frac{g(\frac{s}{\sqrt{\gamma n}})}{f(r)}\right)^n e^{\frac{-i\beta s}{\sqrt{\gamma n}}}ds,
\end{equation}
where $s = t\sqrt{\gamma n} $. For any $t$ in the interval  $[-C{\scriptstyle\sqrt{\frac{\log n}{n}}},C{\scriptstyle\sqrt{\frac{\log n}{n}}}]$, we have
\begin{equation}\frac{g(t)}{f(r)} = 1-\frac{\gamma}{2}t^2 + O(t^3) =e^{-\frac{\gamma}{2}t^2}+O(t^3)= e^{-\frac{\gamma}{2}t^2} + O\big(({\textstyle\frac{\log n}{n}})^{\frac{3}{2}}\big).
\end{equation}
From this it follows that
\begin{equation}
\left(\frac{g(\frac{s}{\sqrt{\gamma n}})}{f(r)}\right)^n = \left(e^{-\frac{\gamma}{2}(\frac{s}{\sqrt{\gamma n}})^2} +
O\big(({\textstyle\frac{\log n}{n}})^{\frac{3}{2}}\big)\right)^n = e^{-\frac{s^2}{2}} + O\big({\textstyle \frac{(\log n)^{\frac{3}{2}}}{\sqrt n}}\big).
\end{equation}
Therefore integral (\ref{eq4}) becomes
\begin{equation}\label{eq5}
\int^{C\sqrt{\gamma \log n}}_{-C\sqrt{\gamma \log n}}\left(e^{-\frac{s^2}{2}} + O\big({\textstyle\frac{(\log n)^{\frac{3}{2}}}{\sqrt n}}\big)\right)
e^{\frac{-i\beta s}{\sqrt{\gamma n}}}ds.
\end{equation}
We want to find the behavior of the integral when $n\to \infty$, in this case the factor $e^{\frac{-i\beta s}{\sqrt{\gamma n}}}\to 1$ and does not contribute to the integral, so (\ref{eq5}) becomes
\begin{equation}\label{r4}
    \lim_{n\to\infty} \int^{C\sqrt{\gamma \log n}}_{-C\sqrt{\gamma \log n}}\Big(e^{-\frac{s^2}{2}} + O\big({\textstyle \frac{(\log n)^{\frac{3}{2}}}{\sqrt n}}\big)\Big)ds = \lim_{n\to\infty}\Big(\int^\infty_{-\infty} e^{-\frac{s^2}{2}}ds + O\big({\textstyle \frac{(\log n)^2}{\sqrt n}}\big)\Big) = \sqrt{2\pi}.
\end{equation}
Finally by (\ref{r1}), (\ref{r2}), (\ref{r3}) and (\ref{r4}), we get
\begin{equation}
    f_{n,\alpha n + \beta} = \frac{f(r)^n}{r^{\beta}\sqrt{2\pi\gamma n}}\big(1 + o(1)\big) = \frac{f(r)^n}{\sqrt{2\pi n}}\frac{1}{r^{\beta+1}}\sqrt{\frac{f(r)}{f''(r)}}\big(1 + o(1)\big).
\end{equation}


This proves the formula (\ref{gf}) in Theorem \ref{gestimate}. It remains to prove formula (\ref{gsf}) in Theorem \ref{gestimate}. Again by the Residue Theorem, we can write the summation $\sum_{i=0}^{\alpha n + \beta}f_{n,i}$ as

\begin{equation}\label{sum}
\sum_{i = 0}^{\alpha n + \beta} f_{n,i}
 =\frac{1}{2\pi i}\oint \frac{f(z)^n}{z^\beta}(\sum_{n=0}^\infty z^n)\frac{dz}{z}=\frac{1}{2\pi i}\oint \frac{f(z)^n}{1-z}\frac{dz}{z^{1+\beta}}.
\end{equation}
The term $\frac{1}{1-z}$ on the right hand side of (\ref{sum}) contributes a factor $\frac{1}{1-r}$, so formula (\ref{gsf}) in Theorem \ref{gestimate} follows from the same computation. This completes the proof of Theorem \ref{gestimate}. 

\subsection{Some Useful Inequalities}
In this section, $n$ is a sufficiently large integer.
With the estimation of coefficients $f_{n,i}$ in previous section, we prove some useful inequalities which will be used in the proof of Theorem~\ref{optimum} in next section. We denote $[x]$ the integral part of $x$. First we give an interesting observation.

\begin{lemma}
By symmetry, we have $f_{n,i} = f_{n,2n-i}$.
\end{lemma}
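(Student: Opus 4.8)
The plan is to exploit the fact that $p(x) = 1+x+x^2$ is self-reciprocal (palindromic): since $1+x+x^2 = x^2\bigl(1 + x^{-1} + x^{-2}\bigr)$, raising both sides to the $n$th power gives $(1+x+x^2)^n = x^{2n}\bigl(1+x^{-1}+x^{-2}\bigr)^n$. This reduces the lemma to a comparison of coefficients.

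First I would write, by definition, $(1+x+x^2)^n = \sum_{i=0}^{2n} f_{n,i}x^i$, noting that the sum terminates at $i=2n$ because $\deg\,(1+x+x^2)^n = 2n$ (and $f_{n,i}=0$ for $i<0$ or $i>2n$, so the claimed identity is trivial outside this range). Next I would substitute $x\mapsto x^{-1}$ to get $(1+x^{-1}+x^{-2})^n = \sum_{i=0}^{2n} f_{n,i}x^{-i}$, multiply through by $x^{2n}$, and reindex by $j = 2n-i$, obtaining $x^{2n}(1+x^{-1}+x^{-2})^n = \sum_{j=0}^{2n} f_{n,2n-j}\,x^{j}$. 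Comparing this with the two expressions for $(1+x+x^2)^n$ and equating coefficients of $x^j$ for each $0\le j\le 2n$ yields $f_{n,j} = f_{n,2n-j}$, as desired.

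Alternatively, one can give a direct combinatorial proof: $f_{n,i}$ counts the tuples $(a_1,\dots,a_n)\in\{0,1,2\}^n$ with $a_1+\cdots+a_n = i$, and the involution $(a_1,\dots,a_n)\mapsto(2-a_1,\dots,2-a_n)$ is a bijection from such tuples onto those summing to $2n-i$. Either route is essentially immediate; there is no genuine obstacle, the only point worth stating carefully is the range $0\le i\le 2n$ of the index and the vanishing of $f_{n,i}$ outside it, so that the symmetry is asserted for every coefficient.
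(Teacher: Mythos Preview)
Your proof is correct and takes essentially the same approach as the paper: both exploit the palindromic nature of $1+x+x^2$ via the substitution $x\mapsto x^{-1}$, the only cosmetic difference being that the paper factors $(1+x+x^2)^n = x^n(x^{-1}+1+x)^n$ and invokes the symmetry of the Laurent polynomial $(x^{-1}+1+x)^n$ directly, while you reindex explicitly after the substitution. Your alternative combinatorial bijection $(a_1,\dots,a_n)\mapsto(2-a_1,\dots,2-a_n)$ is a nice addition not present in the paper.
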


\begin{proof}
Since $f_{n,i}$ is the coefficient of $x^i$ in the expansion of the polynomial $(1+x+x^2)^n$, if we write $(1+x+x^2)^n=x^n(x^{-1}+1+x)^n$, then $f_{n,i}$ is the coefficient of $x^{i-n}$ in the expansion $(x^{-1}+1+x)^n$, and $f_{n,2n-i}$ is the coefficient of $x^{n-i}$ in the expansion of $(x^{-1}+1+x)^n$. By the symmetry of $(x^{-1}+1+x)^n$, we have $f_{n,i} = f_{n,2n-i}$.
\end{proof}

\begin{lemma}\label{1}
$\sum_{i=0}^{ [n-s/2]}f_{n,i}-f_{n,s}<0$ for $n\gg 0$ and all $s>n$.
\end{lemma}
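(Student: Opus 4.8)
The plan is to pass, via the symmetry of the coefficients, to a cleaner equivalent inequality, and then to compare a short partial sum of the $f_{n,i}$ against a single, much larger, coefficient.

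\textbf{Reduction.} Using $f_{n,s}=f_{n,2n-s}$, set $j=2n-s$. Since $s>n$ we have $j<n$, and since $n-s/2=j/2$, the assertion becomes $\sum_{i=0}^{[j/2]}f_{n,i}<f_{n,j}$. (For $s\ge 2n$ the left side of the lemma is identically $0$, so the genuine range is $n<s<2n$, i.e. $1\le j\le n-1$.) For $j=1$ this is just $1=f_{n,0}<n=f_{n,1}$, so from now on $2\le j\le n-1$.

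\textbf{The one analytic input plus log-concavity.} Apply \eqref{f} of Corollary~\ref{estimate} with $\alpha=\tfrac12$: if $r_{1/2}=\tfrac{\sqrt{13}-1}{6}$ denotes the minimizer of $(1+x+x^2)/\sqrt x$ on $(0,\infty)$, then any ratio of consecutive coefficients with indices near $n/2$ satisfies $f_{n,[n/2]+\beta}/f_{n,[n/2]+\beta-1}\to 1/r_{1/2}=(\sqrt{13}+1)/2$ as $n\to\infty$, and $(\sqrt{13}+1)/2>2$ since $\sqrt{13}<4$. On the other hand $(f_{n,0},\dots,f_{n,2n})$ is log-concave with no internal zeros (it is the $n$-fold convolution of the log-concave sequence $(1,1,1)$), so $f_{n,i}/f_{n,i-1}$ is non-increasing for $1\le i\le 2n$ and $(f_{n,i})_i$ is unimodal with peak at $i=n$. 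Combining these, for $n\gg 0$,
\[
\frac{f_{n,i}}{f_{n,i-1}}\ \ge\ \frac{f_{n,[n/2]}}{f_{n,[n/2]-1}}\ \ge\ 2\qquad\text{for all}\quad 1\le i\le [n/2].
\]

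\textbf{Conclusion.} Telescoping the last display gives $f_{n,i}\le 2^{-(m-i)}f_{n,m}$ whenever $i\le m\le[n/2]$, so for any such $m$,
\[
\sum_{i=0}^{m}f_{n,i}\ \le\ f_{n,m}\sum_{k=0}^{m}2^{-k}\ <\ 2f_{n,m}.
\]
Next, for $2\le j\le n-1$ one has $f_{n,j}\ge 2f_{n,[j/2]}$: if $j\le[n/2]$ then $f_{n,j}/f_{n,[j/2]}=\prod_{i=[j/2]+1}^{j}f_{n,i}/f_{n,i-1}$ is a product of at least one factor, each $\ge 2$; if $j>[n/2]$ then by unimodality $f_{n,j}\ge f_{n,[n/2]+1}$ while $f_{n,[j/2]}\le f_{n,[n/2]}$ (as $[j/2]\le[(n-1)/2]\le[n/2]\le n$), and $f_{n,[n/2]+1}/f_{n,[n/2]}\to 1/r_{1/2}>2$ by the previous step. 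Taking $m=[j/2]$ in the displayed bound and then using this claim,
\[
\sum_{i=0}^{[j/2]}f_{n,i}\ <\ 2f_{n,[j/2]}\ \le\ f_{n,j},
\]
which is the required inequality.

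\textbf{Where the difficulty lies.} The real obstacle is uniformity in $j$: since $j=2n-s$ moves with $n$, the naive comparison of exponential growth rates (the sum up to $[j/2]$ has rate $\min_x (1+x+x^2)x^{-j/2n}$, strictly below the rate $\min_x(1+x+x^2)x^{-j/n}$ of $f_{n,j}$) degenerates both as $j/n\to 0$ and as $j/n\to 1$, while Corollary~\ref{estimate} is only stated for a fixed exponent $\alpha$. Routing everything through the single consecutive-ratio estimate at $\alpha=\tfrac12$ together with log-concavity and unimodality is precisely what eliminates any case analysis on the size of $j$. The only auxiliary fact to pin down carefully is the log-concavity of the trinomial coefficients; this is classical (convolution preserves log-concavity of finite sequences with no internal zeros), and if one wishes to avoid it, it suffices to check directly that among $1\le i\le[n/2]$ the smallest consecutive ratio $f_{n,i}/f_{n,i-1}$ is attained at $i=[n/2]$.
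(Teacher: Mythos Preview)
Your proof is correct and takes a genuinely different route from the paper's. Both begin with the same substitution (your $j=2n-s$ is the paper's $\alpha n$), but the paper then invokes Corollary~\ref{estimate} at \emph{two} exponents, $\alpha$ and $\alpha/2$, and compares the resulting exponential bases $\min_x(1+x+x^2)/x^{\alpha/2}<\min_x(1+x+x^2)/x^{\alpha}$. You instead apply Corollary~\ref{estimate} only once, at the fixed value $\alpha=\tfrac12$, to pin down the single ratio $f_{n,[n/2]}/f_{n,[n/2]-1}\to 1/r_{1/2}>2$, and then let log-concavity of the trinomial sequence propagate this into the uniform bound $f_{n,i}/f_{n,i-1}\ge 2$ for $i\le[n/2]$. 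This buys you exactly what you identify in your final paragraph: the paper's argument, as written, uses Corollary~\ref{estimate} with an $\alpha$ that varies over all of $(0,1)$ as $s$ ranges over $(n,2n)$, and the corollary is only stated for fixed $\alpha$, so uniformity of the $o(1)$ and of the constants is not on the table; your argument needs no such uniformity. One small slip: to conclude $(\sqrt{13}+1)/2>2$ you want $\sqrt{13}>3$, not $\sqrt{13}<4$.
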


\begin{proof}
Let $s=n+k$, $\alpha n=n-k$, then $n>k>0$, $0< \alpha <1$ and $f_{n,s} = f_{n, 2n-s} = f_{n,n-k} = f_{n,\alpha n}$, the above inequality becomes
\[
\sum_{i=0}^{[\alpha n/2]}f_{n,i}-f_{n,\alpha n}<0.
\]

Let $r_1$ be a positive real number that minimizes $\frac{1+x+x^2}{x^{\alpha/2}}$, let $r_2$ be a positive real number that minimizes $\frac{1+x+x^2}{x^\alpha}$. As $0<\alpha<1$, the minimal value of $\frac{1+x+x^2}{x^\alpha}$ on positive real axis increases as $\alpha$ increases. Therefore we have
\begin{equation}\label{r}
\frac{1+r_1+r_1^2}{r_1^{\alpha/2}} < \frac{1+r_2+r_2^2}{r_2^{\alpha}}.
\end{equation}

By Corollary \ref{estimate}, we have for large $n$
\[
\sum_{i=0}^{[\alpha n/2]}f_{n,i}= \frac{C_1}{\sqrt{n}}\left(\frac{1+r_1+r_1^2}{r_1^{\alpha/2}}\right)^n(1 + o(1))
\]
and 
\[
f_{n,\alpha n}= \frac{C_2}{\sqrt{n}}\left(\frac{1+r_2+r_2^2}{r_2^\alpha}\right)^n(1+o(1))
\]
for some positive constants $C_1$ and $C_2$. Then it is clear that $\sum_{i=0}^{[\alpha n/2]}f_{n,i}-f_{n,\alpha n}<0$ by inequality \eqref{r}.
\end{proof}

Similarly, we can prove the following inequality

\begin{lemma}\label{2}
We have $2f_{n,0}+\sum_{i=1}^{[n/2]}f_{n,i}-f_{n,n}<0$ for $n\gg 0$.
\end{lemma}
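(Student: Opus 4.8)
The plan is to reduce the claimed inequality to a comparison of exponential growth rates, exactly in the spirit of the proof of Lemma~\ref{1}. Since $f_{n,0}=1$, the left-hand side equals
\[
2f_{n,0}+\sum_{i=1}^{[n/2]}f_{n,i}-f_{n,n}=1+\sum_{i=0}^{[n/2]}f_{n,i}-f_{n,n}.
\]
So it suffices to show that the partial sum $\sum_{i=0}^{[n/2]}f_{n,i}$, whose exponential growth rate is strictly less than $3$, is eventually overwhelmed by the central coefficient $f_{n,n}$, whose exponential growth rate is exactly $3$ (the constant term $2f_{n,0}=2$ is of course negligible).

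Next I would estimate the two pieces. For the partial sum, apply \eqref{sf} of Corollary~\ref{estimate} with $\alpha=\tfrac12$: writing $[n/2]=\tfrac12 n+\beta$ we have $|\beta|\le\tfrac12$, and for $f(x)=\frac{1+x+x^2}{x^{1/2}}$ the minimizer $r_1$ satisfies $r_1\in(0,1)$, exactly as in the proof of Theorem~\ref{ubA} (here $f'(1)=\frac{3\,(3-1-2\cdot\frac12)}{2}=\frac32>0$ while $\lim_{x\to0^+}f(x)=+\infty$). Hence \eqref{sf} gives
\[
\sum_{i=0}^{[n/2]}f_{n,i}=O\!\left(\frac{\rho^{\,n}}{\sqrt n}\right),\qquad \rho:=\min_{x>0}\frac{1+x+x^2}{\sqrt x},
\]
and the key numerical input is $\rho<3$, which follows by testing $x=\tfrac12$: $\rho\le\frac{1+\frac12+\frac14}{\sqrt{1/2}}=\frac{7\sqrt2}{4}<3$ (equivalently $98<144$). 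For the central coefficient I would use the crude bound $f_{n,n}\ge\frac{3^n}{2n+1}$, valid because the $2n+1$ coefficients $f_{n,0},\dots,f_{n,2n}$ sum to $(1+1+1)^n=3^n$ and, by unimodality and the symmetry $f_{n,i}=f_{n,2n-i}$, $f_{n,n}$ is the largest of them; alternatively one may invoke the classical asymptotic $f_{n,n}\sim\frac{3^{n+1/2}}{2\sqrt{\pi n}}$.

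Combining the two estimates,
\[
2f_{n,0}+\sum_{i=1}^{[n/2]}f_{n,i}-f_{n,n}\ \le\ 2+O\!\left(\frac{\rho^{\,n}}{\sqrt n}\right)-\frac{3^n}{2n+1}\ =\ -\frac{3^n}{2n+1}\bigl(1-o(1)\bigr)\ <\ 0
\]
for all sufficiently large $n$, since $\rho<3$ forces $\frac{3^n}{2n+1}$ to dominate both the constant and the $O(\rho^n/\sqrt n)$ term. I expect the only genuinely non-routine point to be the estimate for $f_{n,n}$: it sits at the boundary value $\alpha=1$, which lies \emph{outside} the range $0<\alpha<1$ covered by Corollary~\ref{estimate}, so it cannot be fed into \eqref{f}/\eqref{sf} and must be handled by a separate (and fortunately elementary) lower bound as above; everything else is a routine repetition of the pattern used for Lemma~\ref{1}.
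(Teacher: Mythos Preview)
Your proof is correct and follows the same overall strategy as the paper: bound $2f_{n,0}+\sum_{i\le[n/2]}f_{n,i}$ via the asymptotic with $\alpha=\tfrac12$, then compare against a lower bound for $f_{n,n}$ with strictly larger exponential growth rate. The one place you diverge is in how you handle $f_{n,n}$. The paper does not attempt to apply Corollary~\ref{estimate} at $\alpha=1$ either; instead it uses unimodality to write $f_{n,n}\ge f_{n,2n/3}$ and then invokes the asymptotic at $\alpha=\tfrac23$ (where $r=\frac{\sqrt{33}-1}{8}$, $f(r)>2.7551$), comparing this against the partial-sum rate $f(r_1)<2.4626$ coming from $\alpha=\tfrac12$ with $r_1=\frac{\sqrt{13}-1}{6}$. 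Your route is more elementary and sharper: the pigeonhole bound $f_{n,n}\ge 3^n/(2n+1)$ (via unimodality and symmetry) gives exponential rate $3$ directly, and you then only need the crude test value $\rho\le\frac{7\sqrt2}{4}<3$ rather than the exact minimizer $r_1$. Both arguments implicitly rely on unimodality of the coefficients of $(1+x+x^2)^n$; you make this explicit, the paper leaves it tacit in the step $f_{n,n}>f_{n,2n/3}$. Your observation that $\alpha=1$ falls outside the hypothesis $0<\alpha<1$ of Corollary~\ref{estimate} is well taken and explains why some separate argument for $f_{n,n}$ is needed in either approach.
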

\begin{proof}
We estimate the asymptotic behavior of $2f_{n,0}+\sum_{i=1}^{[n/2]}f_{n,i}$, which is dominated by the second term. In this case, $\alpha=\frac{1}{2}$, and by solving the equation $\eqref{radius}$, we have $r=\frac{\sqrt{13}-1}{6}$, and $f(r)< 2.4626$. By Corollary \ref{estimate} we get
\[
2f_{n,0}+\sum_{i=1}^{[n/2]}f_{n,i}= O(2.4626^n).
\]
Similarly by Corollary \ref{estimate}, for $\alpha=\frac{2}{3}$ we get $r= \frac{\sqrt{33}-1}{8}$, $f(r)>2.7551$ and hence $f_{n,\frac{2}{3}n}= \Omega(2.7551^n)$.
Therefore $2f_{n,0}+\sum_{i=1}^{[n/2]}f_{n,i}< f_{n, \frac{2}{3}n} < f_{n,n}$ for $n\gg 0$.
\end{proof}

As we said earlier, an optimal solution to the Linear Program in section \ref{L} depends on $n$ mod $3$, next we will consider the three cases separately.

\begin{lemma}\label{3s}
If $n=3s$, then we have $f_{n,2k-1}+2f_{n,2k}+f_{n,2k+1}+\cdots+f_{n,[(n+k)/2]}-f_{n,n-k}<0$ for $0<k<s$ and $n\gg 0$.

\begin{enumerate}
    \item We first show that  $f_{n,2k-1}+2f_{n,2k}+f_{n,2k+1}+\cdots+f_{n,[(n+k)/2]}-f_{n,n-k}<0$ is true for $0<k<s-2$.

\begin{proof}
We will prove a stronger result
\[
2\sum_{i=0}^{[(n+k)/2]}f_{n,i}-f_{n,n-k}<0.
\]
The largest $k$ such that the above inequality holds is $k=s-3$. It is not hard to see that the strongest inequality among them is when $k = s-3$, so we only need to show the above inequality for $k=s-3$. In this case, the inequality becomes
\[
2\sum_{i=0}^{2s-2}f_{n,i}-f_{n,2s+3}<0.
\]
By formula $\eqref{sf}$ in Corollary \ref{estimate}, we see that $2\sum_{i=0}^{2s-2}f_{n,i} = 2C\frac{f(r)^n}{\sqrt n}\frac{r^2}{1-r}(1+o(1))$, and $f_{n,2s+3}= C\frac{f(r)^n}{\sqrt nr^3}(1+o(1))$, where $C=\frac{1}{2\pi}\sqrt{\frac{1+r+r^2}{2\alpha-(1-\alpha)r}}$ and $\alpha=\frac{2}{3}$, $r=\frac{\sqrt{33}-1}{2}$. Therefore it suffices to show
\[
2\frac{r^2}{1-r}-\frac{1}{r^3}<0.
\]
The left hand side is $-3.0651<0$.
\end{proof}

\item We show $f_{n,2k-1}+2f_{n,2k}+f_{n,2k+1}+\cdots+f_{n,[(n+k)/2]}-f_{n,n-k}<0$ is also true for $k=s-2$ and $k=s-1$.
\begin{proof}
When $k=s-2$, the inequality becomes
\[
f_{n,2s-5}+2f_{n,2s-4}+f_{n,2k-3}+f_{n,2s-2}+f_{n,2s-1}-f_{n,2s+2}<0.
\]
By formula $\eqref{f}$ in Corollary \ref{estimate}, we have $f_{n-i}= C\frac{f(r)^n}{\sqrt n}r^i(1+o(1))$ for small $i$. Therefore it suffices to show that 
\[
r^5+2r^4+r^3+r^2+r-\frac{1}{r^2}<0.
\]
The left hand side is $-1.3689<0$.

When $k=s-1$, the inequality becomes
\[
f_{n,2s-3}+2f_{n,2s-2}+f_{n,2s-1}-f_{n,2s+1}<0.
\]
It suffices to show that
\[
r^3+2r^2+r-\frac{1}{r}<0.
\]
The left hand side is $-0.1810<0$
\end{proof}

\end{enumerate}
\end{lemma}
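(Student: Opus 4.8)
The plan is to dominate the left-hand side by a sum whose asymptotics are accessible from Corollary~\ref{estimate}, and to treat the range $0<k<s$ in two regimes depending on how close $k$ is to $s$. For the bulk regime $1\le k\le s-3$ I would prove the stronger inequality $2\sum_{i=0}^{[(n+k)/2]}f_{n,i}-f_{n,n-k}<0$. This implies the claimed inequality, because for $k<s$ one has $2k\le[(n+k)/2]$, so the original left-hand side $f_{n,2k-1}+2f_{n,2k}+\cdots+f_{n,[(n+k)/2]}$ equals $\sum_{i=2k-1}^{[(n+k)/2]}f_{n,i}+f_{n,2k}$, and each of these two pieces is at most $\sum_{i=0}^{[(n+k)/2]}f_{n,i}$. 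Moreover, for a fixed (large) $n$ the quantity $\sum_{i=0}^{[(n+k)/2]}f_{n,i}$ is nondecreasing in $k$ while $f_{n,n-k}=f_{n,n+k}$ is nonincreasing in $k$ on $0\le k\le s$ (the coefficients of $(1+x+x^2)^n$ are symmetric about $n$ and nondecreasing up to the middle), so the stronger inequality is tightest at $k=s-3$, and it is enough to establish it there.

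At $k=s-3$ the stronger inequality reads $2\sum_{i=0}^{2s-2}f_{n,i}-f_{n,2s+3}<0$. Here $2s-2=\tfrac{2n}{3}-2$ and $2s+3=\tfrac{2n}{3}+3$, so taking $\alpha=\tfrac23$, $r=\tfrac{\sqrt{33}-1}{8}\in(0,1)$ and $f(r)=\theta$ in Corollary~\ref{estimate}, formulas~\eqref{f} and~\eqref{sf} show that both terms are $\Theta\!\big(\tfrac{\theta^n}{\sqrt n}\big)$ with the \emph{same} common factor $\tfrac{\theta^n}{\sqrt{2\pi n}}\sqrt{\tfrac{1+r+r^2}{2\alpha-(1-\alpha)r}}\,(1+o(1))$; cancelling it reduces the inequality to the numerical statement $2\tfrac{r^2}{1-r}-\tfrac1{r^3}<0$, which is routine to check. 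This disposes of $1\le k\le s-3$.

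The two remaining values $k=s-2$ and $k=s-1$ must be handled separately, since there the crude bound $2\sum_{i=0}^{[(n+k)/2]}f_{n,i}$ is already too large. For these the left-hand side is a sum of boundedly many coefficients $f_{n,\frac{2n}{3}+j}$ with $|j|$ bounded, so I would apply~\eqref{f} directly with $\alpha=\tfrac23$ and $\beta=j$ and cancel the common asymptotic factor: the inequality for $k=s-2$ becomes $r^5+2r^4+r^3+r^2+r-\tfrac1{r^2}<0$ and the one for $k=s-1$ becomes $r^3+2r^2+r-\tfrac1r<0$, each checked numerically at $r=\tfrac{\sqrt{33}-1}{8}$.

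The main obstacle is uniformity in $k$: Corollary~\ref{estimate} gives an asymptotic only for a \emph{fixed} exponent $\alpha$ and a \emph{bounded} shift $\beta$, but the index $n-k$ has $\alpha=(n-k)/n\to1$ when $k$ is small, outside the stated range, and more generally $\alpha$ and the gaps between the indices involved drift with $n$. The device that sidesteps this is precisely the monotonicity in $k$ (for fixed $n$) of the two sides of the strengthened inequality, so that a single honest asymptotic evaluation at $k=s-3$ covers the entire interval $k\le s-3$; only the two genuinely near-boundary values $k\in\{s-2,s-1\}$, whose indices lie within $O(1)$ of $\tfrac{2n}{3}$, then require the estimates applied term by term.
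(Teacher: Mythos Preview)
Your proposal is correct and follows essentially the same approach as the paper: bound the bulk range $1\le k\le s-3$ by the stronger inequality $2\sum_{i=0}^{[(n+k)/2]}f_{n,i}-f_{n,n-k}<0$, reduce to the single case $k=s-3$ and hence to the numerical check $2\tfrac{r^2}{1-r}-\tfrac1{r^3}<0$, and then treat $k=s-2,s-1$ directly via~\eqref{f}, arriving at the same polynomial inequalities $r^5+2r^4+r^3+r^2+r-\tfrac1{r^2}<0$ and $r^3+2r^2+r-\tfrac1r<0$. Your write-up is in fact more careful than the paper's on two points the paper leaves implicit: you justify why the ``stronger'' inequality actually dominates the original sum, and you supply the monotonicity argument (unimodality and symmetry of the coefficients of $(1+x+x^2)^n$) that pins the worst case at $k=s-3$, thereby handling the uniformity-in-$k$ issue you correctly flag.
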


We have proved all inequalities in the case $n=3s$. Next let us consider the case $n=3s-1$.

\begin{lemma}\label{3s-1}
If $n=3s-1$, then we have $f_{n,2k-1}+2f_{n,2k}+f_{n,2k+1}+\cdots+f_{n,[(n+k)/2]}-f_{n,n-k}<0$ for $0<k<s-1$ and $n \gg 0$.
\end{lemma}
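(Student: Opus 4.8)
The plan is to mirror the two‑step argument used for $n=3s$ in Lemma~\ref{3s}, now carried out on the range $0<k<s-1$, i.e. $1\le k\le s-2$, with the shift $n=3s-1$. The only inputs needed are the asymptotics \eqref{f} and \eqref{sf} of Corollary~\ref{estimate} with $\alpha=\tfrac23$ and $r=\tfrac{\sqrt{33}-1}{8}$ (the minimizer of $\tfrac{1+x+x^2}{x^{2/3}}$, so $f(r)=\theta$), together with the fact that the coefficients $f_{n,j}$ of $(1+x+x^2)^n$ are unimodal with peak at $j=n$.

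\textbf{Step 1 (small $k$).} For $0<k\le s-3$ I would prove the stronger inequality
\[
2\sum_{i=0}^{[(n+k)/2]}f_{n,i}-f_{n,n-k}<0 .
\]
This implies the asserted one: all $f_{n,i}\ge 0$, and since $2k-1\le[(n+k)/2]$ on this range, the positive part of the lemma's left‑hand side is term‑by‑term at most $2\sum_{i=0}^{[(n+k)/2]}f_{n,i}$. For fixed large $n$ the quantity $2\sum_{i=0}^{[(n+k)/2]}f_{n,i}-f_{n,n-k}$ does not decrease as $k$ grows, because $[(n+k)/2]$ is non‑decreasing while $f_{n,n-k}$ does not increase ($n-k\le n-1$, and the $f_{n,j}$ increase up to $j=n$). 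So it suffices to check $k=s-3$, where $n-k=2s+2$ and $[(n+k)/2]=2s-2$. By \eqref{sf} and \eqref{f} one has $\sum_{i=0}^{2s-2}f_{n,i}=C\tfrac{\theta^n}{\sqrt n}\tfrac{r^{4/3}}{1-r}(1+o(1))$ and $f_{n,2s+2}=C\tfrac{\theta^n}{\sqrt n}r^{-8/3}(1+o(1))$ with the same constant $C$; cancelling $C\theta^n/\sqrt n$ reduces the claim to $2\tfrac{r^{4/3}}{1-r}-\tfrac{1}{r^{8/3}}<0$, whose left side is $\approx-1.58$.

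\textbf{Step 2 (boundary $k=s-2$).} Here the crude bound of Step 1 just fails — its reduced form is $2\tfrac{r^3}{1-r}-1\approx 0.03>0$ — so I would estimate the terms individually. For $k=s-2$ we have $2k-1=2s-5$, $[(n+k)/2]=2s-2$ and $n-k=2s+1$, so the lemma reads
\[
f_{n,2s-5}+2f_{n,2s-4}+f_{n,2s-3}+f_{n,2s-2}-f_{n,2s+1}<0 .
\]
Applying \eqref{f} with $\alpha=\tfrac23$ (so $\alpha n=2s-\tfrac23$) gives $f_{n,2s+j}=C\tfrac{\theta^n}{\sqrt n}r^{-(j+2/3)}(1+o(1))$ for each $j$; dividing out the common factor and multiplying by $r^{5/3}$ turns the claim into $r^6+2r^5+r^4+r^3-1<0$, whose left side is $\approx-0.48$.

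Since Step 1 covers $1\le k\le s-3$ and Step 2 covers $k=s-2$, together they give the whole range $0<k<s-1$. The analytic ingredients (unimodality of the $f_{n,j}$, the monotonicity in $k$, and the asymptotics \eqref{f}, \eqref{sf}) are all already in hand, so there is no deep obstacle; the only delicate point is the bookkeeping — correctly pinning down the floors $[(n+k)/2]$ and the now fractional exponents of $r$, and in particular confirming that $k=s-3$ is exactly the last value for which the crude bound of Step 1 survives, so that Steps 1 and 2 leave no gap.
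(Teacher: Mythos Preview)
Your proposal is correct and follows essentially the same two–step argument as the paper: a crude bound $2\sum_{i=0}^{[(n+k)/2]}f_{n,i}-f_{n,n-k}<0$ for $k\le s-3$ (checked at the extreme $k=s-3$), followed by a direct termwise estimate at $k=s-2$. The only cosmetic difference is normalization: because $\alpha n=2s-\tfrac23$ is non-integral you keep the fractional powers of $r$ and then clear them, arriving at $2\tfrac{r^{4/3}}{1-r}-r^{-8/3}<0$ and $r^6+2r^5+r^4+r^3-1<0$, whereas the paper writes the equivalent inequalities $2\tfrac{r^2}{1-r}-\tfrac{1}{r^2}<0$ and $r^5+2r^4+r^3+r^2-\tfrac{1}{r}<0$ (yours multiplied by $r^{-2/3}$ and $r^{-1}$ respectively). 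Your added remarks—the explicit unimodality/monotonicity justification for why $k=s-3$ is the hardest case, and the observation that the crude bound \emph{just} fails at $k=s-2$ (reduced form $2\tfrac{r^3}{1-r}-1\approx 0.03>0$)—are not in the paper but are welcome, since they explain why the split into two steps is forced rather than merely convenient.
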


\begin{proof}\ 
\begin{enumerate}
    \item We first show $f_{n,2k-1}+2f_{n,2k}+f_{n,2k+1}+\cdots+f_{n,[(n+k)/2]}-f_{n,n-k}<0$ is true when $k<s-2$. It is enough to show the strongest case when $k=s-3$. We prove this by showing 
    \[
    2\sum_{i=0}^{2s-2}f_{n,i}-f_{n,2s+2}<0.
    \]
    By the same argument as in the Lemma \ref{3s}, it suffices to show
    \[
    2\frac{r^2}{1-r}-\frac{1}{r^2}<0.
    \]
    The left hand side is $-1.1144<0$.
    \item Next we show that $f_{n,2k-1}+2f_{n,2k}+f_{n,2k+1}+\cdots+f_{n,[(n+k)/2]}-f_{n,n-k}<0$ is true when $k=s-2$. The inequality becomes
    \[
    f_{n,2s-5}+2f_{n,2s-4}+f_{n,2s-3}+f_{n,2s-2}-f_{n,2s+1}<0.
    \]
    And it suffices to show
    \[
r^5+2r^4+r^3+r^2-\frac{1}{r}<0.
\]
The left hand side is $-0.8050<0$.
\end{enumerate}
\end{proof}

This completes the proof of the inequalities we will use in the case of $n=3s-1$. Next we consider the case $n = 3s - 2$.

\begin{lemma}\label{3s-2}
If $n=3s-2$, then we have $f_{n,2k-1}+2f_{n,2k}+f_{n,2k+1}+\cdots+f_{n,[(n+k)/2]}-f_{n,n-k}<0$ for $0<k<s-1$ and $n \gg 0$.
\end{lemma}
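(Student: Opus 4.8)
The plan is to follow the proofs of Lemmas~\ref{3s} and~\ref{3s-1} almost verbatim. Since here $0<k<s-1$, i.e. $1\le k\le s-2$, I would split into a \emph{bulk case} $1\le k\le s-3$, handled via a stronger inequality, and a single \emph{boundary case} $k=s-2$, handled by a direct term-by-term asymptotic.

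\textbf{Bulk case.} As in the earlier lemmas, I would prove the stronger statement $2\sum_{i=0}^{[(n+k)/2]}f_{n,i}-f_{n,n-k}<0$; this implies the desired inequality because $f_{n,2k-1}+2f_{n,2k}+\cdots+f_{n,[(n+k)/2]}\le 2\sum_{i=0}^{[(n+k)/2]}f_{n,i}$ whenever $3k\le n$ (so that $f_{n,2k}$ is one of the summands on the right). Moreover $2\sum_{i=0}^{[(n+k)/2]}f_{n,i}-f_{n,n-k}$ is non-decreasing in $k$: the sum grows since $[(n+k)/2]$ is non-decreasing and all $f_{n,i}\ge 0$, while $f_{n,n-k}$ shrinks since $n-k<n$ and the sequence $(f_{n,i})_i$ is unimodal with peak at $i=n$. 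Hence it suffices to treat the largest value $k=s-3$, for which $[(n+k)/2]=2s-3$ and $n-k=2s+1$, so that the inequality reads $2\sum_{i=0}^{2s-3}f_{n,i}-f_{n,2s+1}<0$. Applying \eqref{sf} and \eqref{f} of Corollary~\ref{estimate} with $\alpha=\frac{2}{3}$ and $r=\frac{\sqrt{33}-1}{8}$ (note $\alpha n=2s-\frac{4}{3}$, so the relevant offsets $\beta=-\frac{5}{3}$ and $\beta=\frac{7}{3}$ are bounded), this reduces to $2\,\frac{r^{5/3}}{1-r}-\frac{1}{r^{7/3}}<0$; multiplying through by $r^{1/3}$ it becomes $2\,\frac{r^{2}}{1-r}-\frac{1}{r^{2}}<0$, the same numerical inequality as in Lemma~\ref{3s-1}, whose left-hand side is $\approx -1.11<0$.

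\textbf{Boundary case $k=s-2$.} Here the stronger inequality fails: it would reduce to $2\,\frac{r^{2}}{1-r}-1<0$, which is false since the left-hand side is $\approx 0.73>0$. So I would instead estimate the actual left-hand side $f_{n,2s-5}+2f_{n,2s-4}+f_{n,2s-3}+f_{n,2s-2}-f_{n,2s}$ term by term. All five indices have the form $\frac{2}{3}n+\beta$ with bounded $\beta$, hence \eqref{f} of Corollary~\ref{estimate} gives $f_{n,\frac{2}{3}n+\beta}=C\,\frac{f(r)^n}{\sqrt n}\,r^{-\beta}(1+o(1))$ for one constant $C$, with $r=\frac{\sqrt{33}-1}{8}$. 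Dividing by $C\,\frac{f(r)^n}{\sqrt n}$ and clearing the common factor $r^{4/3}$, the inequality reduces to $r^{5}+2r^{4}+r^{3}+r^{2}-1<0$, which is verified numerically (the left-hand side is $\approx -0.12<0$).

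The argument is essentially routine given Corollary~\ref{estimate}, exactly as in Lemmas~\ref{3s} and~\ref{3s-1}. The points that need care are: (i) checking that the stronger inequality genuinely fails at $k=s-2$, so that the two-case split is unavoidable; (ii) making the reduction of the bulk case to the single value $k=s-3$ rigorous through the monotonicity-in-$k$ observation above, rather than merely asserting it; and (iii) bookkeeping the non-integer offsets $\beta$ coming from $\alpha n=2s-\frac{4}{3}\notin\Z$ when invoking Corollary~\ref{estimate}: they remain bounded, so the estimates apply, but one must multiply by a suitable power of $r$ to land on a clean polynomial inequality, whose sign is then settled by evaluating at $r=\frac{\sqrt{33}-1}{8}\approx 0.593$.
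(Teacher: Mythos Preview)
Your proposal is correct and follows essentially the same two-case split as the paper's own proof: a bulk case $1\le k\le s-3$ handled via the stronger inequality $2\sum_{i=0}^{[(n+k)/2]}f_{n,i}-f_{n,n-k}<0$ evaluated at $k=s-3$, and the boundary case $k=s-2$ handled term by term, both reducing via Corollary~\ref{estimate} to numerical inequalities in $r=\tfrac{\sqrt{33}-1}{8}$ (your bulk normalization $2r^2/(1-r)-1/r^2<0$ differs from the paper's $2r^3/(1-r)-1/r<0$ only by a positive factor of $r$). If anything, your write-up is more careful than the paper's: you justify why $k=s-3$ is the hardest bulk value via unimodality, you verify that the stronger inequality genuinely fails at $k=s-2$, and you track the non-integer offsets $\beta$ arising from $\alpha n=2s-\tfrac{4}{3}$.
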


\begin{proof}\ 
\begin{enumerate}
    \item We first show $f_{n,2k-1}+2f_{n,2k}+f_{n,2k+1}+\cdots+f_{n,[(n+k)/2]}-f_{n,n-k}<0$ is true when $k<s-2$. It is enough to show the strongest case when $k=s-3$. We prove this by showing 
    \[
    2\sum_{i=0}^{2s-3}f_{n,i}-f_{n,2s+1}<0.
    \]
    By the same argument as in the Lemma \ref{3s}, it suffices to show
    \[
    2\frac{r^3}{1-r}-\frac{1}{r}<0.
    \]
    The left hand side is $-0.6609<0$.
    \item Next we show that $f_{n,2k-1}+2f_{n,2k}+f_{n,2k+1}+\cdots+f_{n,[(n+k)/2]}-f_{n,n-k}<0$ is true when $k=s-2$. The inequality becomes
    \[
    f_{n,2s-5}+2f_{n,2s-4}+f_{n,2s-3}+f_{n,2s-2}-f_{n,2s}<0.
    \]
    And it suffices to show
    \[
r^5+2r^4+r^3+r^2-1<0.
\]
The left hand side is $-0.1189<0$.
\end{enumerate}
\end{proof}

Now we are well prepared and next we proceed to find an optimal solution to the Linear Program and give a proof of Theorem~\ref{optimum}.

\section{An optimal solution to the linear program}

The key idea of the proof is by induction. The strategy is that we first prove if $(t_0, t_1,\cdots, t_{2n}) $ is a solution to the Linear Program, then we can find a better solution which satisfies $t_i = 0$ for all $i>n$. Assume now we have a solution with the property that $t_i = 0$ for all $i>n$, then we show that we can find a better solution such that $t_n = 0$. Then we proceed further and show that for $i<n$, we can still find a better solution with more $t_i$ equal to zero. Finally we get the claimed optimal solution.

\begin{prop}\label{2n}
If $(t_0, t_1,\cdots, t_{2n}) $ is a solution to the Linear Program, then we can find a better solution with the property that $t_{2n}=0$.
\end{prop}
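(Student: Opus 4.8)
The plan is to show that the constraint involving $t_{2n}$ is "wasteful" and can be removed by pushing its value down to a cheaper index. Note that $t_{2n}$ appears in exactly those constraints $t_i + t_j + t_k \ge 1$ with $i+j+k \le 2n$ and $\max(i,j,k) = 2n$; since $i,j,k \ge 0$, this forces $\{i,j,k\}$ to be a permutation of $\{2n,0,0\}$, i.e.\ the only constraint that genuinely \emph{needs} $t_{2n}$ is $2t_0 + t_{2n} \ge 1$. So $t_{2n}$ is involved in a single constraint. First I would define a new solution $(t_0', t_1', \dots, t_{2n}')$ by setting $t_i' = t_i$ for $1 \le i \le 2n-1$, $t_{2n}' = 0$, and $t_0' = \max(t_0, \tfrac{1}{2})$ (equivalently, raise $t_0$ just enough so that $2t_0' \ge 1$). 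Then every constraint is still satisfied: any constraint not involving index $2n$ is unaffected or has a (weakly) larger $t_0$-term, and the lone constraint $2t_0' + t_{2n}' = 2t_0' \ge 1$ holds by construction. So the new tuple is feasible.

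Second, I would check that the objective does not increase. The change in $3\sum_i f_{n,i} t_i$ is $3\big(f_{n,0}(t_0' - t_0) - f_{n,2n} t_{2n}\big)$. If $t_0 \ge \tfrac12$ already, then $t_0' = t_0$ and the change is $-3 f_{n,2n} t_{2n} \le 0$, done. Otherwise $t_0 < \tfrac12$, so $t_0' - t_0 = \tfrac12 - t_0 < \tfrac12$, and we must show $f_{n,0}(\tfrac12 - t_0) \le f_{n,2n} t_{2n}$. Here I use two facts: the symmetry $f_{n,0} = f_{n,2n} = 1$ (indeed both equal $1$, since $(1+x+x^2)^n$ has constant term and leading coefficient $1$ — this is the special case of Lemma on $f_{n,i} = f_{n,2n-i}$), and the original constraint $2t_0 + t_{2n} \ge 1$, which gives $t_{2n} \ge 1 - 2t_0 = 2(\tfrac12 - t_0)$. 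Hence $f_{n,2n} t_{2n} = t_{2n} \ge 2(\tfrac12 - t_0) \ge (\tfrac12 - t_0) = f_{n,0}(\tfrac12 - t_0)$, so the objective weakly decreases. Therefore $(t_0', \dots, t_{2n}')$ is a solution at least as good with $t_{2n}' = 0$.

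I expect the only subtlety — hardly an obstacle — is bookkeeping the case split on whether $t_0 \ge \tfrac12$, and being careful that raising $t_0$ does not break any other constraint (it cannot, since $t_0$ appears only with a nonnegative coefficient in every constraint). One could alternatively phrase this uniformly: replace $(t_0, t_{2n})$ by $(t_0 + \tfrac12 t_{2n},\, 0)$, which keeps $2t_0 + t_{2n}$ unchanged hence preserves feasibility of the single affected constraint, can only help the others, and changes the objective by $3 t_{2n}\big(\tfrac12 f_{n,0} - f_{n,2n}\big) = 3t_{2n}(\tfrac12 - 1) = -\tfrac32 t_{2n} \le 0$. This uniform substitution is cleaner and avoids the case analysis entirely; I would present the proof this way.
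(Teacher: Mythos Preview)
Your proposal is correct, and the ``uniform substitution'' you arrive at in the final paragraph---replacing $(t_0,t_{2n})$ by $(t_0+\tfrac12 t_{2n},\,0)$ and using $f_{n,0}=f_{n,2n}$---is exactly the paper's proof. The initial case analysis on whether $t_0\ge\tfrac12$ is an unnecessary detour, so you were right to discard it in favor of the uniform version.
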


\begin{proof}
Assume $t_{2n}=\epsilon>0$, the only constraint containing $t_{2n}$ is: 
\[
2t_0+t_{2n}=2t_0+\epsilon\ge 1.
\]
Let $t_0'=t_0+\epsilon/2$, $t_{2n}'=0$, $t_i'=t_i$ for $1\le i\le 2n-1$. Since there is only one constraint containing $t_{2n}$, which is $2t_0'+t_{2n}'\ge 1$, and we only change $t_0$ and $t_{2n}$, all the other constraints involving $t_0$ are satisfied since we increase $t_0$. Those constraints without $t_0,t_{2n}$ do not change. Therefore the set $(t_0', t_1',\cdots, t_{2n}') $ also gives a solution. However, $\sum_{i=0}^{2n}f_{n,i}t_i'-\sum_{i=0}^{2n}f_{n,i}t_i=f_{n,0}\epsilon/2-f_{n,2n}\epsilon<0$. Here we used the result $f_{n,0}=f_{n,2n}$. So we get a better solution.
\end{proof}

Next we improve the result in Proposition \ref{2n}.

\begin{prop}\label{>n}
Let $(t_1, t_2,\cdots, t_{2n})$ be a solution to the Linear Program obtained as in Proposition \ref{2n}, then we can find a better solution with the property that $t_{i}=0$ for all $i>n$.
\end{prop}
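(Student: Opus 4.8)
The plan is to zero out the tail coordinates $t_{n+1},\dots,t_{2n-1}$ one index at a time --- the coordinate $t_{2n}$ already vanishes by Proposition~\ref{2n} --- modifying at each step only coordinates of index $\le n$ and never increasing the objective $3\sum_i f_{n,i}t_i$; after finitely many such steps we reach a feasible solution with $t_i=0$ for all $i>n$ that is no worse than the one we started with.

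The engine is one structural observation about the constraints. Fix $s$ with $n<s<2n$. Every constraint of the Linear Program in which $t_s$ occurs has the form $t_a+t_b+t_s\ge 1$ with $a+b\le 2n-s$; a constraint with two or three copies of the index $s$ would force $2s\le 2n$ and hence does not occur. Moreover, in each such constraint the smaller of $a$ and $b$ is at most $[n-s/2]$ (the integer part of $n-s/2$), for if both indices exceeded $[n-s/2]$ their sum would exceed $2n-s$. So, starting from a feasible $(t_0,\dots,t_{2n})$ with $t_s=\epsilon>0$, I would put
\[
t_a':=t_a+\epsilon\ \ (0\le a\le[n-s/2]),\qquad t_s':=0,\qquad t_i':=t_i\ \text{for all other }i.
\]
A constraint $t_a+t_b+t_s\ge1$, say with $a\le[n-s/2]$, then becomes $t_a'+t_b'+t_s'\ge(t_a+\epsilon)+t_b\ge t_a+t_b+t_s\ge1$, while any constraint not mentioning $t_s$ is untouched or only strengthened, since the single coordinate that decreased is $t_s$; thus $(t_i')$ is feasible. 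The objective changes by $\epsilon\bigl(\sum_{a=0}^{[n-s/2]}f_{n,a}-f_{n,s}\bigr)$, which is negative by Lemma~\ref{1}. Because this step only raises coordinates of index $[n-s/2]\le[(n-1)/2]<n$, it never disturbs a tail coordinate that has already been set to $0$, so I may apply it in turn to every $s\in\{n+1,\dots,2n-1\}$ with $t_s>0$ and finish after at most $n-1$ steps. (Equivalently, one may do it in one shot: set $t_a':=t_a+\sum_{s=n+1}^{2n-2a}t_s$ for $a\le n$ and $t_i':=0$ for $i>n$; the feasibility verification is the same, and after swapping the order of summation the objective change is $\sum_{s=n+1}^{2n-1}t_s\bigl(\sum_{a=0}^{[n-s/2]}f_{n,a}-f_{n,s}\bigr)\le 0$ by Lemma~\ref{1}, with the missing $s=2n$ term harmless because $f_{n,0}=f_{n,2n}$ and $t_{2n}=0$.)

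The one point requiring thought --- and the real obstacle --- is picking the compensation so that it is both feasible and cheap. If I instead raised $t_a$ all the way to $1$ for $a\le[n-s/2]$, feasibility would still hold but the cost $\sum_{a=0}^{[n-s/2]}(1-t_a)f_{n,a}$ need not tend to $0$ with $\epsilon$, so the net change could be positive; and if I raised $t_a$ by $\epsilon/2$ over the whole range $a\le 2n-s$, the cost $\tfrac{\epsilon}{2}\sum_{a=0}^{2n-s}f_{n,a}$ is linear in $\epsilon$ but can exceed the saving $\epsilon f_{n,s}$ once $s$ is close to $n$. The correct move uses the structural observation to shrink the range to $\{0,\dots,[n-s/2]\}$ while keeping the increment small ($\epsilon$), so the cost $\epsilon\sum_{a=0}^{[n-s/2]}f_{n,a}$ is beaten by the saving $\epsilon f_{n,s}$ precisely as quantified in Lemma~\ref{1}. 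Verifying the "smaller index $\le[n-s/2]$" claim is itself a one-line argument, and once the right modification is in hand the remaining feasibility and objective computations are direct substitutions.
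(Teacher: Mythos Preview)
Your proof is correct and follows essentially the same approach as the paper: zero out the tail coordinates one by one, compensating by adding $\epsilon$ to $t_a$ for $0\le a\le[n-s/2]$, and invoke Lemma~\ref{1} for the objective decrease. The only (minor) difference is that the paper uses the downward induction hypothesis $t_l=0$ for $l>s$ to reduce to the constraints with $i+j=2n-s$, whereas you check feasibility for all constraints containing $t_s$ directly via the observation $\min(a,b)\le[n-s/2]$; both routes yield the same modification and the same cost computation.
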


\begin{proof}
By Proposition \ref{2n}, we have $t_{2n} = 0$. Therefore a reasonable approach is to use induction. Let $s$ be an integer such that $s>n$, assume by induction that we have shown $t_l=0$ for all $l>s$, then we show that we can get a better solution with $t_s=0$. Assume $t_s=\epsilon>0$. The constraints are: $t_i+t_j+t_k\ge 1$ for $i+j+k\le 2n$. We only care about the constraints with a term $t_{s}$, say $k=s$, these constraints are $t_i + t_j + t_{s}\ge 1$, with $i+j\le 2n-s$. If $i+j<2n-s$, then $2n-i-j>s$ and $t_i+t_j+t_{2n-i-j}=t_i+t_j\ge 1$. Therefore we only need to consider the case $i+j=2n-s$. The constraints are 
\[
    t_i+t_{2n-s-i}+t_s\ge 1
\]
for $i=0,1,\cdots, [n-s/2]$. Where $[x]$ is the integral part of $x$. We change the $t_i$'s, let 
\[
    t_i'=t_i+\epsilon
\]
for $i=0,1,\cdots, [n-s/2]$ and $t_s'=0$, fix all other $t_j$'s. These new $t_i'$ satisfy the constraints, and we have $\sum_{i=0}^{2n}f_{n,i}t_i'-\sum_{i=0}^{2n}f_{n,i}t_i=\sum_{i=0}^{ [n-s/2]}f_{n,i}\epsilon-f_{n,s}\epsilon$. However, by Lemma \ref{1}, we have the following inequality

\begin{equation}
\sum_{i=0}^{ [n-s/2]}f_{n,i}-f_{n,s}<0.
\label{eq:P1}\end{equation}

So we get a better solution. 

\end{proof}

Next based on the result in Proposition \ref{>n}, we show that we actually can improve the result by letting $t_n = 0$, the situation is a little bit different since in this case we have more constraints. We analyze this in the following proposition:

\begin{prop}\label{n}
Let $(t_1, t_2,\cdots, t_{2n})$ be a solution to the linear program obtained as in Proposition \ref{>n}, then we can find a better solution with the property that $t_{n}=0$.
\end{prop}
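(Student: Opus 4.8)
The plan is to imitate the induction step of Proposition~\ref{>n}, the one new feature being a single constraint in which the index $n$ occurs with multiplicity two. Assume $t_n=\epsilon>0$; by Proposition~\ref{>n} we already know $t_i=0$ for every $i>n$. First I would list the constraints $t_i+t_j+t_k\ge 1$ (with $i+j+k\le 2n$) that actually involve $t_n$. Since $3n>2n$ for $n>0$, the index $n$ never appears three times; it appears twice only in the single constraint $2t_n+t_0\ge 1$, coming from $n+n+k\le 2n$ forcing $k=0$; and it appears exactly once in the constraints $t_n+t_i+t_j\ge 1$ with $0\le i\le j$ and $i+j\le n$, excluding the pair $(i,j)=(0,n)$. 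In each of the latter one has $2i\le i+j\le n$, so $i\le[n/2]$.

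This structure suggests setting $t_0'=t_0+2\epsilon$, $t_i'=t_i+\epsilon$ for $1\le i\le [n/2]$, $t_n'=0$, and $t_j'=t_j$ for all other $j$. I would then verify feasibility case by case: the doubled constraint becomes $t_0+2\epsilon\ge 1$, which is just the original inequality; every ``single'' constraint becomes $t_i'+t_j'\ge t_i+t_j+\epsilon=t_i+t_j+t_n\ge 1$, because its smaller index $i\le[n/2]$ was raised by at least $\epsilon$ and $t_j'\ge t_j$; and any constraint not involving $t_n$ still holds since every coordinate was only increased. Hence $(t_0',\dots,t_{2n}')$ is again a feasible solution of the Linear Program.

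It remains to compare objective values. The only changes are $+2f_{n,0}\epsilon$ at the coordinate $0$, $+f_{n,i}\epsilon$ for $1\le i\le[n/2]$, and $-f_{n,n}\epsilon$ at the coordinate $n$, so
\[
\sum_{i=0}^{2n}f_{n,i}t_i'-\sum_{i=0}^{2n}f_{n,i}t_i=\epsilon\Bigl(2f_{n,0}+\sum_{i=1}^{[n/2]}f_{n,i}-f_{n,n}\Bigr),
\]
and for $n\gg 0$ this is negative by Lemma~\ref{2}. Thus the modified tuple is strictly better, which is what we wanted.

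The main obstacle, and the reason this does not follow verbatim from Proposition~\ref{>n}, is the presence of the constraint $2t_n+t_0\ge 1$ in which $t_n$ is doubled: to keep it satisfied after deleting $t_n$ we are forced to raise $t_0$ by $2\epsilon$ rather than by $\epsilon$, which is exactly why the inequality we need, Lemma~\ref{2}, carries a coefficient $2$ in front of $f_{n,0}$. Getting this one piece of bookkeeping right is the only real subtlety; everything else is the routine constraint-checking already seen in the previous propositions.
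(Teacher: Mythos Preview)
Your proof is correct and follows essentially the same approach as the paper: you make the identical modification $t_0'=t_0+2\epsilon$, $t_i'=t_i+\epsilon$ for $1\le i\le[n/2]$, $t_n'=0$, and invoke Lemma~\ref{2} for the resulting inequality. The only cosmetic difference is that the paper first reduces the relevant ``single'' constraints to those with $i+j=n$ (using that $t_i+t_j\ge 1$ already holds when $i+j<n$, since $t_{2n-i-j}=0$), whereas you verify feasibility directly for all $i+j\le n$; both arguments lead to the same place.
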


\begin{proof}
Assume $t_n=\epsilon>0$. Among the indices $i, j ,k$, since $i+j+k\le 2n$, we can have at most two indices equal to $n$. If two of them are equal to $n$, say $i=j=n$, then the constraint is 
\[
t_0+2t_n\ge 1.
\]
If there is only one index equal $n$, say $k = n$, we have $t_i+t_j+t_n\ge 1$. If $i+j<n$, then $2n-i-j>n$ and $t_i+t_j+t_{2n-i-j}=t_i+t_j\ge 1$. Therefore we only need to consider the case when $i+j=n$. The constraints are
\begin{align*}
     &t_0+2t_n\ge 1,
     \\
     &t_1+t_{n-1}+t_n\ge 1,
     \\
     &\cdots
     \\
     &t_i+t_{n-i}+t_n\ge 1,
     \\
     &\cdots
     \\
     & t_{[n/2]}+t_{\{n/2\}}+t_n\ge 1. 
     \\
\end{align*}
Where $\{x\}$ denotes the least integer that is greater than or equal to $x$. Since we only increase $t_i$'s, the new $t_i$'s automatically satisfy the constraints without $t_n$ term. Let 

\begin{align*}
    &t_0'=t_0+2\epsilon,
    \\
    &t_1'=t_1+\epsilon,
    \\
    &\cdots
    \\
    &t_i'=t_i+\epsilon,
    \\
    &\cdots
    \\
    &t_{[n/2]}'=t_{[n/2]}+\epsilon,
    \\
    &t_n'=0.
\end{align*}
All the other $t_i$'s are not changed. As we see, these new $t'_i$ satisfy all the constraints and $\sum_{i=0}^{2n}f_{n,i}t_i'-\sum_{i=0}^{2n}f_{n,i}t_i=\epsilon(2f_{n,0}+\sum_{i=1}^{[n/2]}f_{n,i}-f_{n,n})$. By Lemma~\ref{2}, we have

\begin{equation}
2f_{n,0}+\sum_{i=1}^{[n/2]}f_{n,i}-f_{n,n}<0.
\label{eq:P2}\end{equation}

Therefore we have a better solution. 
\end{proof}

We will keep using the above strategy. Assume we can proceed the above argument, which means for some integer $l$ with $l<n$, we have $t_{s}=0$ for all $s>l$, we analyze the condition under which we can find a better solution with $t_l=0$. Let $l=n-k$. For $i\le 2k-2$, we have $i+2(n-k+1)\le 2n$, so $t_{i}+2t_{n-k+1}\ge 1$, and $t_{i}\ge 1$ since $t_{n-k+1}=0$. By the same argument as in the proposition above  and the fact that $t_{i}\ge 1$ for all $i\le 2k-2$, we only need to consider the constraints $t_i+t_j+t_d\ge 1$ with a term $t_l$ and $i, j, d >2k-2$, where $l=n-k$. Say $d=l$, then $t_i+t_j+t_l\ge 1$, and $i+j\le 2n-l=n+k$. If $i+j<2n-l=n+k$, then $t_i+t_j+t_{2n-i-j}=t_i+t_j\ge 1$. Therefore we only need to consider the case $i+j+l=2n$. The constraints are
\begin{align*}
    &t_{2k-1}+t_{n-k}+t_{n-k+1}\ge 1,
    \\
    &t_{2k}+2t_{n-k}\ge 1,
    \\
    &t_{2k+1}+t_{n-k-1}+t_{n-k}\ge 1,
    \\
    &\cdots
    \\
    &t_{2k+r}+t_{n-k-r}+t_{n-k}\ge 1,
    \\
    &
    \cdots
    \\
    &t_{[(n+k)/2]}+t_{\{(n+k)/2\}}+t_{n-k}\ge 1.
\end{align*}

Assume $t_l = \epsilon > 0$, we change the $t_i$'s by the following rule
\begin{align*}
    &t_{2k-1}'=t_{2k-1}+\epsilon,
    \\
    &t_{2k}'=t_{2k}+2\epsilon,
    \\
    &t_{2k+1}'=t_{2k+1}+\epsilon,
    \\
    &\cdots
    \\
    &t_{[(n+k)/2]}'=t_{[(n+k)/2]}+\epsilon,
    \\
    &t_{n-k}'=0.
\end{align*}
All the other $t_i$'s are not changed. Then we have 
\begin{align*}
    &\sum_{i=0}^{2n}f_{n,i}t_i'-\sum_{i=0}^{2n}f_{n,i}t_i
    \\
    &=\epsilon(f_{n,2k-1}+2f_{n,2k}+f_{n,2k+1}+\cdots+f_{n,[(n+k)/2]}-f_{n,n-k}).
\end{align*}

If the following inequality holds, we would have a better solution:
\begin{equation}
f_{n,2k-1}+2f_{n,2k}+f_{n,2k+1}+\cdots+f_{n,[(n+k)/2]}-f_{n,n-k}<0.
\label{P3}\end{equation}

As we pointed out, the solution depends on $n$ mod $3$. Recall that $l = n - k$, if $n = 3s$, from Lemma \ref{3s}, the above inequality is true for $0<k<s$; if $n=3s-1$, Lemma \ref{3s-1} tells us that the above inequality is true if $0<k<s-1$; if $n = 3s-2$, Lemma \ref{3s-2} tells us that the above inequality holds for $0<k<s-1$. For convenience, we list the results for which the above inequality holds in three cases.

\begin{prop}\ 
\begin{enumerate}
    \item If $n=3s$, then the inequality \eqref{P3} holds for $0<k<s$, therefore an optimal solution to the Linear Program in Section \ref{L} satisfies $t_{i} = 0$ for $i \ge n-(s-1)=2s+1$.

    \item If $n=3s - 1$, then the inequality \eqref{P3} holds for $0<k<s-1$, therefore an optimal solution to the Linear Program in Section \ref{L} satisfies $t_{i} = 0$ for $i \ge n-(s-2)=2s+1$.

    \item If $n=3s-2$, then the inequality \eqref{P3} holds for $0<k<s-1$, therefore an optimal solution to the Linear Program in Section \ref{L} satisfies $t_{i} = 0$ for $i \ge n-(s-2)=2s$.

\end{enumerate}
\end{prop}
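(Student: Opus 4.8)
The plan is to assemble the proposition from the reduction already obtained in Propositions~\ref{2n}, \ref{>n} and~\ref{n}, the inductive mechanism set up in the paragraphs immediately above, and the case-split inequalities of Lemmas~\ref{3s}, \ref{3s-1} and~\ref{3s-2}. Indeed, for each residue of $n$ modulo $3$ the assertion ``inequality~\eqref{P3} holds for the stated range of $k$'' is literally a restatement of the corresponding lemma (Lemma~\ref{3s} for $n=3s$, Lemma~\ref{3s-1} for $n=3s-1$, Lemma~\ref{3s-2} for $n=3s-2$), so only the ``therefore'' clause needs an argument.

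For that clause I would induct on $k\ge 0$, writing $l=n-k$. The base case $k=0$ is exactly Propositions~\ref{>n} and~\ref{n} combined: every optimal solution of the Linear Program has $t_i=0$ for all $i\ge n$. For the inductive step, fix $k\ge 1$ in the admissible range and assume we already know $t_s=0$ for all $s>n-k$. If some optimal solution had $t_{n-k}=\epsilon>0$, then, as worked out just above, the vanishing of $t_{n-k+1}$ forces $t_i\ge 1$ for $i\le 2k-2$, so among the constraints containing $t_{n-k}$ only the finitely many listed ones (those with $i+j+(n-k)=2n$ and $i,j>2k-2$) are not automatically satisfied; the explicit increments $t_{2k-1}\mapsto t_{2k-1}+\epsilon$, $t_{2k}\mapsto t_{2k}+2\epsilon$, $t_{2k+r}\mapsto t_{2k+r}+\epsilon$ for $1\le r\le [(n+k)/2]-2k$, and $t_{n-k}\mapsto 0$ restore every one of them, and they change the objective by $\epsilon\bigl(f_{n,2k-1}+2f_{n,2k}+\cdots+f_{n,[(n+k)/2]}-f_{n,n-k}\bigr)<0$ by~\eqref{P3}, contradicting optimality. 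Hence $t_{n-k}=0$, and the induction advances for as long as~\eqref{P3} stays valid.

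It then remains to read off the largest admissible $k$ in each case: $k=s-1$ when $n=3s$ (Lemma~\ref{3s}), $k=s-2$ when $n=3s-1$ (Lemma~\ref{3s-1}), and $k=s-2$ when $n=3s-2$ (Lemma~\ref{3s-2}). Since the induction produces $t_{n-k}=0$ for every such $k$, together with the base range $i\ge n$ this yields $t_i=0$ for $i\ge n-(s-1)=2s+1$ in case~(1), for $i\ge n-(s-2)=2s+1$ in case~(2), and for $i\ge n-(s-2)=2s$ in case~(3), which is the assertion.

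I do not expect a genuine obstacle here, because the two substantive ingredients — the feasibility bookkeeping of the inductive step and the sign estimates packaged into~\eqref{P3} — have already been carried out in the preceding sections. The only point that wants a little care is anchoring the induction correctly: the hypothesis ``$t_s=0$ for $s>n-k$'' invoked at step $k$ is precisely the conclusion of steps $0,\dots,k-1$, and one should check that each modified tuple is an honest feasible point — every unlisted constraint still holds because all coordinates are only increased except $t_{n-k}$, and the constraints involving $t_{n-k}$ are exactly the ones the increments were chosen to compensate. Granting that, the proposition follows by pure assembly together with the elementary index arithmetic in the three cases.
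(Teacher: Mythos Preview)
Your proposal is correct and matches the paper's approach exactly: the paper treats this proposition as a summary statement, with its ``proof'' consisting of the paragraph immediately preceding it that invokes Lemmas~\ref{3s}, \ref{3s-1}, \ref{3s-2} for the three residue classes and relies on the inductive reduction mechanism (Propositions~\ref{2n}, \ref{>n}, \ref{n} and the analysis following them) already in place. Your write-up is in fact more explicit than the paper's, carefully anchoring the induction and verifying feasibility of the modified tuple, but the underlying argument is identical.
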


We can not proceed further, otherwise these inequalities are not true. By the above analysis, we can greatly reduce the number of constraints. Finally the number of variables in the constraints are $2,3,4$ respectively in the three cases, which can be computed by hand. We show that the optimal solutions are:
\begin{corollary}
If $n=3s$, $t_i=1$ for $0\le i\le 2s-2$, $t_{2s-1}=\frac{2}{3}, t_{2s}=\frac{1}{3}$, and $t_i=0$ for $i\ge 2s+1$.
\end{corollary}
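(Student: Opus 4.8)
The plan is to reduce the Linear Program, via the earlier propositions, to a finite program in only two variables and then solve it directly. By the preceding proposition applied to the case $n = 3s$, any optimal solution may be taken to satisfy $t_i = 0$ for all $i \ge 2s+1$; and by the argument establishing inequality \eqref{P1} together with the constraint $t_i + 2t_{n-k+1} \ge 1$ for $i \le 2k-2$, the entries $t_i$ for $0 \le i \le 2s-2$ may all be pushed up to $1$ (increasing any $t_i$ only relaxes the remaining constraints, and since $f_{n,i} \ge 0$ this can only increase the objective, so for a \emph{minimizing} solution we argue instead that the constraints \emph{force} $t_i \ge 1$ there, hence $t_i = 1$ is optimal because lowering it is infeasible). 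Thus only $t_{2s-1}$ and $t_{2s}$ remain free, with $t_i = 1$ for $i \le 2s-2$ and $t_i = 0$ for $i \ge 2s+1$ already pinned down.

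Next I would write out exactly which constraints of the form $t_i + t_j + t_k \ge 1$ with $i+j+k \le 2n = 6s$ still involve the two free variables $t_{2s-1}, t_{2s}$ and are not automatically satisfied by the pinned values. A triple summing to at most $6s$ that contains an index $\ge 2s+1$ contributes $t_{(\cdot)} = 0$ from that slot, so the binding constraints are those whose indices all lie in $\{0,1,\dots,2s\}$; among these, any constraint with an index $\le 2s-2$ already has a $1$ in it and is satisfied. Hence the only genuinely active constraints are those built from $\{2s-1, 2s\}$: namely $3t_{2s-1} \ge 1$ (from $i=j=k=2s-1$, valid since $3(2s-1) = 6s-3 \le 6s$), $2t_{2s-1} + t_{2s} \ge 1$ (from $(2s-1)+(2s-1)+2s = 6s-2 \le 6s$), and $t_{2s-1} + 2t_{2s} \ge 1$ (from $(2s-1)+2s+2s = 6s-1 \le 6s$); the triple $3 \cdot 2s = 6s \le 6s$ gives $3t_{2s} \ge 1$ as well. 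One checks $2s-1+t_{\text{anything}}$ type constraints are slack. So the reduced program is: minimize $f_{n,2s-1}t_{2s-1} + f_{n,2s}t_{2s}$ subject to those four inequalities and $t_{2s-1}, t_{2s} \ge 0$.

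The final step is to solve this two-variable LP. Since the objective coefficients $f_{n,2s-1}, f_{n,2s}$ are positive, the optimum lies at a vertex of the feasible region. Comparing the candidate vertices — intersections of pairs of the active constraint lines, in particular $2t_{2s-1}+t_{2s} = 1$ with $t_{2s-1}+2t_{2s} = 1$, which gives $t_{2s-1} = t_{2s} = \tfrac13$, versus the point $t_{2s-1} = \tfrac23, t_{2s} = \tfrac13$ where $3t_{2s} = 1$ meets $2t_{2s-1}+t_{2s} = 1$, versus $t_{2s-1} = \tfrac13, t_{2s} = \tfrac23$ — one uses $f_{n,2s-1} > f_{n,2s}$ (immediate from $f_{n,i} = f_{n,2n-i}$ and the unimodality of the coefficients of $(1+x+x^2)^n$, or from Corollary \ref{estimate}) to conclude the minimum is attained at $t_{2s-1} = \tfrac23, t_{2s} = \tfrac13$: shifting weight toward the smaller coefficient $f_{n,2s}$ is advantageous, but the constraint $t_{2s-1} + 2t_{2s} \ge 1$ prevents lowering $t_{2s-1}$ below what $3t_{2s-1} \ge 1$ and $2t_{2s-1} + t_{2s} \ge 1$ permit; a short case check of the finitely many vertices confirms the claimed point. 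This yields $t_i = 1$ for $0 \le i \le 2s-2$, $t_{2s-1} = \tfrac23$, $t_{2s} = \tfrac13$, $t_i = 0$ for $i \ge 2s+1$.

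The main obstacle is the bookkeeping in the middle step: verifying that \emph{no other} constraint involving the free variables (or involving an index in the pinned-zero range $\ge 2s+1$ together with small indices) secretly forces a different inequality, i.e. correctly enumerating the finitely many binding constraints of the reduced program. Once that enumeration is pinned down, both the reduction and the final vertex comparison are routine; the inequality $f_{n,2s-1} > f_{n,2s}$ needed to select the right vertex is a triviality given the earlier symmetry lemma and unimodality.
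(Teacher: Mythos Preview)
Your reduction to a two–variable problem and the structure of the argument match the paper's, but the bookkeeping in the middle step—the step you yourself flag as the main obstacle—goes wrong in a way that breaks the proof.

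First, your enumeration of the active constraints is incorrect. You write that a triple containing an index $\ge 2s+1$ ``contributes $t_{(\cdot)}=0$ from that slot, so the binding constraints are those whose indices all lie in $\{0,1,\dots,2s\}$.'' The conclusion does not follow: if one index, say $k\ge 2s+1$, appears, then $t_k=0$ and the constraint becomes $t_i+t_j\ge 1$, which is \emph{stronger}, not weaker, than anything on your list. In particular the triple $(2s-1,2s,2s+1)$ has sum $6s=2n$ and yields $t_{2s-1}+t_{2s}\ge 1$; the triple $(2s-1,2s-1,2s+2)$ yields $2t_{2s-1}\ge 1$. The constraint $t_{2s-1}+t_{2s}\ge 1$ is exactly the one that drives the paper's argument; without it the point $t_{2s-1}=t_{2s}=\tfrac13$ satisfies every inequality you wrote down and is strictly cheaper than $(\tfrac23,\tfrac13)$, so your reduced program does not have the claimed optimum.

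Second, the coefficient inequality is backwards. Since $2s-1<2s<n=3s$ and the sequence $(f_{n,i})$ is unimodal with peak at $i=n$ (equivalently, by $f_{n,i}=f_{n,2n-i}$), we have $f_{n,2s}>f_{n,2s-1}$, not the reverse. The paper uses this in the right direction: $3t_{2s}\ge 1$ forces $t_{2s}\ge\tfrac13$, and since $f_{n,2s}$ is the larger coefficient one takes $t_{2s}=\tfrac13$; then $t_{2s-1}+t_{2s}\ge 1$ forces $t_{2s-1}\ge\tfrac23$. With your reversed inequality and your constraint list, the vertex comparison you sketch does not actually select $(\tfrac23,\tfrac13)$.

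Once you add the missing constraint $t_{2s-1}+t_{2s}\ge 1$ and correct the sign of $f_{n,2s}-f_{n,2s-1}$, your outline becomes the paper's proof essentially verbatim.
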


\begin{proof}

We have $t_{l}=0$ for $l\ge 2s+1$. Since $t_i+2t_{2s+1}\ge 1$ for $i\le 2s-2$, we have $t_{i}\ge 1$ for $i\le 2s-2$, and by $3t_{2s}\ge 1$, we have $t_{2s}\ge \frac{1}{3}$. The constraints containing $t_{2s-1}$ is $t_{2s-1}+t_{2s}+t_{2s+1}\ge 1$, which is $t_{2s-1}+t_{2s}\ge 1$. If $t_{2s}=\frac{1}{3}+\delta$, then $t_{2s-1}\ge \frac{2}{3}-\delta$. However, $f_{n,2s}>f_{n,2s-1}$. So to minimize $\sum_{i=0}^{2n}f_{n,i}t_i$, we must have $\delta=0$, and $t_{2s}=\frac{1}{3}, t_{2s-1}=\frac{2}{3}$.
\end{proof}

\begin{corollary}
 If $n=3s-1$, $t_i=1$ for $0\le i\le 2s-4$, $t_{2s-3}=\frac{4}{5}, t_{2s-2}=\frac{3}{5},t_{2s-1}=\frac{2}{5}, t_{2s}=\frac{1}{5}$, and $t_i=0$ for $i\ge 2s+1$.
\end{corollary}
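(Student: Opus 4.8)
The plan is to run the reduction used for $n=3s$ one level deeper: after the obvious simplifications the Linear Program of Section~\ref{L} collapses to a problem in the four variables $t_{2s-3},t_{2s-2},t_{2s-1},t_{2s}$, and the tuple in the statement is the vertex of the resulting polytope that minimizes the objective. By the preceding proposition an optimal solution has $t_i=0$ for all $i\ge 2s+1$. Because $i+2(2s+1)\le 2n=6s-2$ holds exactly when $i\le 2s-4$, the constraints $t_i+2t_{2s+1}\ge 1$ force $t_i\ge 1$ for $0\le i\le 2s-4$, and since each such variable appears in the objective with positive weight $f_{n,i}$ we may take $t_i=1$ there. What remains is to minimize the still-varying part $\sum_{j=0}^{3}f_{n,2s-j}\,t_{2s-j}$ of the objective over the constraints $t_i+t_j+t_k\ge 1$, $i+j+k\le 6s-2$, that survive the substitutions $t_i=1$ ($i\le 2s-4$) and $t_i=0$ ($i\ge 2s+1$).

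First I would enumerate those surviving constraints: a triple contributes one only if none of its entries is $\le 2s-4$ and at most one entry is $\ge 2s+1$, and after discarding constraints implied by others one is left with a short list such as
\[
t_{2s-3}\ge\tfrac{1}{2},\quad t_{2s-2}\ge\tfrac{1}{2},\quad 3t_{2s-1}\ge 1,\quad t_{2s-3}+t_{2s-1}\ge 1,
\]
\[
t_{2s-3}+t_{2s}\ge 1,\quad t_{2s-2}+t_{2s-1}\ge 1,\quad t_{2s-2}+2t_{2s}\ge 1,\quad 2t_{2s-1}+t_{2s}\ge 1,
\]
together with nonnegativity. Substituting $(t_{2s-3},t_{2s-2},t_{2s-1},t_{2s})=(\tfrac{4}{5},\tfrac{3}{5},\tfrac{2}{5},\tfrac{1}{5})$ shows it is feasible, with equality precisely in $t_{2s-3}+t_{2s}=1$, $t_{2s-2}+t_{2s-1}=1$, $t_{2s-2}+2t_{2s}=1$ and $2t_{2s-1}+t_{2s}=1$; solving these four linear equations returns $(\tfrac{4}{5},\tfrac{3}{5},\tfrac{2}{5},\tfrac{1}{5})$ uniquely, so this point is a vertex of the feasible region.

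To certify that this vertex minimizes the objective I would produce nonnegative Lagrange multipliers supported on the four tight constraints. Writing $(f_3,f_2,f_1,f_0)=(f_{n,2s-3},f_{n,2s-2},f_{n,2s-1},f_{n,2s})$, one looks for $\lambda_1,\lambda_2,\lambda_3,\lambda_4\ge 0$ with
\[
\lambda_1(1,0,0,1)+\lambda_2(0,1,1,0)+\lambda_3(0,1,0,2)+\lambda_4(0,0,2,1)=(f_3,f_2,f_1,f_0),
\]
the left-hand vectors being the gradients, in the order $(t_{2s-3},t_{2s-2},t_{2s-1},t_{2s})$, of the four tight constraints. This forces $\lambda_1=f_3$, then $\lambda_4=\tfrac{1}{5}(f_0-f_3+2f_1-2f_2)$, $\lambda_3=2\lambda_4+f_2-f_1$ and $\lambda_2=f_2-\lambda_3$. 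By Corollary~\ref{estimate} with $\alpha=\tfrac{2}{3}$ (so that $r=\tfrac{\sqrt{33}-1}{8}$ is the root of \eqref{radius}), the ratios $f_2/f_3$, $f_1/f_3$, $f_0/f_3$ converge to $r^{-1}$, $r^{-2}$, $r^{-3}$ respectively; substituting these values makes $\lambda_1,\lambda_2,\lambda_3,\lambda_4$ all strictly positive. Hence they stay positive for $n\gg 0$, and since the four free variables are nonzero at our point, complementary slackness shows $(\tfrac{4}{5},\tfrac{3}{5},\tfrac{2}{5},\tfrac{1}{5})$ is optimal for the reduced program. Combined with $t_i=1$ for $0\le i\le 2s-4$ and $t_i=0$ for $i\ge 2s+1$, this is the asserted optimal solution, and the case $n=3s-2$ is handled the same way with three free variables.

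The bookkeeping in the second step is the laborious part and the place most prone to error, since overlooking a surviving constraint would invalidate either the feasibility check or the vertex claim. The one conceptually delicate point is the very last inference: the multipliers depend on the finite-$n$ coefficients $f_{n,i}$, not on their limits, so one needs the limiting multipliers to be bounded away from $0$ — which they are, being explicit positive expressions in $r$ — in order to conclude positivity for all sufficiently large $n$.
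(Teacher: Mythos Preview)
Your argument is correct. The reduction to four free variables and the enumeration of surviving constraints match the paper, and the four tight constraints you identify at $(\tfrac{4}{5},\tfrac{3}{5},\tfrac{2}{5},\tfrac{1}{5})$ are exactly the ones the paper singles out (in its notation, $x+w=1$, $y+z=1$, $y+2w=1$, $2z+w=1$). The linear system for the multipliers is solved correctly, and the limiting values $\lambda_i/f_3$ are all strictly positive at $r=\tfrac{\sqrt{33}-1}{8}$, so the positivity conclusion for $n\gg 0$ is sound.

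Where you diverge from the paper is in the optimality step. The paper argues that an optimal vertex must satisfy four of the listed inequalities with equality, then performs a short case analysis (``$y>w$ or $y<w$, and $x<y$ or $x>y$'') and asserts that only one case ``works'', arriving at the same system of four equations. That argument is terse and, notably, never invokes the objective coefficients $f_{n,i}$ at all, so the justification that this particular vertex is the minimizer is left largely implicit. Your route via an explicit dual certificate is more systematic: it directly certifies optimality for the actual objective, and it makes transparent why the conclusion only holds for $n\gg 0$ (the multipliers depend on the $f_{n,i}$, and Corollary~\ref{estimate} is needed to control their signs). The cost is a small amount of extra computation; the benefit is that nothing is left to ``it turns out''.
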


\begin{proof}
For $i\le 2s-4$, we have $t_{i}+2t_{2s+1}=t_{i}\ge 1$. For $i\ge 2s+2, t_{i}=0$. Let $x=t_{2s-3},y=t_{2s-2},z=t_{2s-1},w=t_{2s}$, the relevant inequalities are 
\[
2z+w\ge 1,\quad 2y+w\ge 1,\quad 2x+w\ge 1,
\]
\[
y+2w\ge 1,\quad x+2w\ge1,
\]
\[
x+w\ge 1,
\]
\[
y+z\ge 1,\quad x+z\ge 1,\quad 3z\ge 1,
\]
\[
x+y\ge 1,\quad 2y\ge 1,\quad 2x\ge 1,
\]
Some of the inequalities are redundant, the effective inequalities are
\[
2z+w\ge 1,\quad 2y+w\ge 1,\quad y+2w\ge 1,\quad x+w\ge 1,\quad y+z\ge 1,\quad x+z\ge 1,
\]
\[
x\ge \frac{1}{2},\quad y\ge \frac{1}{2},\quad z\ge \frac{1}{3},
\]
Since we have $4$ variables, therefore, the optimal solution must make four of those inequalities be strictly equalities. Since we have $2y+w\ge 1, y+2w\ge 1$ and $y+z\ge 1,x+z\ge 1$, we can divide into the following cases: $y>w$ or $y<w$, and $x<y$ or $x>y$. It turns out that only $y>w, x>y$ works, then it satisfies
\[
2z+w=1,\quad y+2w=1,\quad x+w=1,\quad y+z=1,
\]
which is $x=\frac{4}{5},y=\frac{3}{5},z=\frac{2}{5},w=\frac{1}{5}$.
\end{proof}

\begin{corollary}
 If $n=3s-2$, $t_i=1$ for $0\le i\le 2s-4$, $t_{2s-3}=\frac{3}{4}, t_{2s-2}=\frac{2}{4},t_{2s-1}=\frac{1}{4},$ and $t_i=0$ for $i\ge 2s$.
\end{corollary}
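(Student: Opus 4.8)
The argument will run parallel to the two corollaries just proved, and is in fact the simplest of the three. First I would invoke the preceding proposition (case $n=3s-2$) to reduce to solutions with $t_i=0$ for all $i\ge 2s$. Then, for each $i\le 2s-4$, the constraint $t_i+2t_{2s}\ge 1$ (legal since $i+2\cdot 2s\le 6s-4=2n$) together with $t_{2s}=0$ forces $t_i\ge 1$, and since every $f_{n,i}>0$ an optimal solution takes $t_i=1$ on this whole range. This collapses the program to the three free variables $x=t_{2s-3}$, $y=t_{2s-2}$, $z=t_{2s-1}$, with objective equal, up to the additive constant $3\sum_{i\le 2s-4}f_{n,i}$, to $3\big(f_{n,2s-3}\,x+f_{n,2s-2}\,y+f_{n,2s-1}\,z\big)$.

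Next I would enumerate the surviving constraints $t_i+t_j+t_k\ge 1$ with $i+j+k\le 6s-4$. Such a constraint is automatically satisfied as soon as one of its indices lies in $[0,2s-4]$ (that index contributes a $1$), and indices $\ge 2s$ contribute $0$, so one only runs through the triples, counted with repetition, whose indices lie in $\{2s-3,2s-2,2s-1\}$ or are $\ge 2s$, checks the index-sum bound, and discards redundancies. The outcome should be that the effective constraints on $(x,y,z)$ are
\[
x\ge\tfrac12,\qquad y\ge\tfrac12,\qquad z\ge 0,\qquad x+z\ge 1,\qquad y+2z\ge 1,
\]
with all others ($2x+z\ge1$, $x+2z\ge1$, $x+y+z\ge1$, $3x\ge1$, $3y\ge1$, and so on) implied by these. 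In particular nothing forces $x,y,z\ge1$, which is why these coordinates come out fractional in the optimum.

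With only three variables and a linear objective having positive coefficients, the minimum over this (unbounded but bounded-below) polyhedron is attained at a vertex, so it is enough to compare the finitely many vertices cut out by tightening three of the five inequalities — or, more transparently, to split on the value of $z$: for $0\le z\le\tfrac14$ the binding lower bounds are $x=1-z$, $y=1-2z$, giving objective $f_{n,2s-3}+f_{n,2s-2}+z\big(f_{n,2s-1}-f_{n,2s-3}-2f_{n,2s-2}\big)$, while for $z\ge\tfrac14$ the objective is increasing in $z$. Everything therefore reduces to the single sign condition
\[
f_{n,2s-1}-f_{n,2s-3}-2f_{n,2s-2}<0\qquad(n\gg 0),
\]
which by formula \eqref{f} of Corollary~\ref{estimate} (applied with $\alpha=\tfrac23$, so that $f_{n,2s-1}:f_{n,2s-2}:f_{n,2s-3}\to 1:r:r^2$) is asymptotically the statement $1-r^2-2r<0$, true because $2r+r^2>1$ for $r=\tfrac{\sqrt{33}-1}{8}$. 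Combined with $f_{n,2s-3}<f_{n,2s-2}<f_{n,2s-1}$, this pins the minimizing vertex at $(x,y,z)=(\tfrac34,\tfrac12,\tfrac14)$, i.e.\ $t_{2s-3}=\tfrac34$, $t_{2s-2}=\tfrac24$, $t_{2s-1}=\tfrac14$, as claimed.

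The only part needing genuine care is the bookkeeping in the second step: verifying that no index-sum inequality involving $t_{2s-3},t_{2s-2},t_{2s-1}$ — possibly paired with a single index $\ge 2s$ — has been missed, and that the reduction to the five effective inequalities is correct. Once that list is nailed down, the rest is the elementary piecewise-linear minimization above together with the one asymptotic sign check, which is immediate from Corollary~\ref{estimate}.
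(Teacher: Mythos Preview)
Your proposal is correct and fills in exactly the details that the paper omits; the paper's proof of this corollary reads in full ``The same argument as above works,'' deferring to the $n=3s-1$ case, and your enumeration of constraints, reduction to the five effective inequalities, and optimization over the resulting polyhedron is precisely what that sentence is pointing to. The only stylistic difference is that where the paper (in the $n=3s-1$ corollary) identifies the vertex by arguing that at the optimum the right number of inequalities must be tight and then case-splitting on which ones, you instead parametrize by $z$ and minimize piecewise-linearly; the two are equivalent and your version makes the single needed sign check $f_{n,2s-1}-f_{n,2s-3}-2f_{n,2s-2}<0$ (asymptotically $1-r^2-2r<0$) completely explicit.
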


\begin{proof}
The same argument as above works.
\end{proof}

We have found an optimal solution and completed the proof of Theorem \ref{optimum}. With Theorem~\ref{optimum} and Corollary~\ref{estimate}, as well as the fact that $c(n)\le \rk^G(v)\le \sum_{i=0}^{2n}f_{n,i}t_i$, we can prove Theorem \ref{capset}. Recall that $c(n)$ is the largest size of a cap set in $\mathbb F^n_3$.

\textbf{Proof of Theorem \ref{capset}}:

In Corollary \ref{estimate}, let $\alpha = 2/3$, it is easy to compute that $r = \frac{\sqrt{33}-1}{8}$, and $f(r) = \frac{1+r+r^2}{r^{2/3}}\approx 2.7551$.

\begin{enumerate}
    \item  If $n = 3s$ for some integer $s > 0$, we now show that
     \begin{equation}
         c(n) \le 2.4951\frac{f(r)^n}{\sqrt{n}}\left(1+o(1)\right) = O\left(\frac{f(r)^n}{\sqrt{n}}\right).
     \end{equation}
     By Theorem \ref{optimum}, we get $t_i=1$ for $i=0,1,\dots,2s-2$,
     $t_{2s - 1} = \frac{2}{3}$, $t_{2s} = \frac{1}{3}$, and $t_i = 0$ for $i \ge 2s + 1$. Therefore we have
    \[
    \sum_{i = 0}^{2n}f_{n,i}t_i = \sum_{i = 0}^{2s-2}f_{n,i}+\frac{2}{3}f_{n,2s-1}+\frac{1}{3}f_{n,2s}.
    \]
    And by Corollary \ref{estimate}, we have
    \[
    \sum_{i=0}^{2n}f_{n,i}t_i=\frac{f(r)^n}{\sqrt{2\pi n}}\sqrt{\frac{1+r+r^2}{2\alpha-(1-\alpha)r}}\Big(\frac{1}{(1-r)r^{-2}}+\frac{2}{3r^{-1}}+\frac{1}{3}\Big)(1 + o(1))
    \]
    \[
    \approx 0.8371\frac{f(r)^n}{\sqrt{n}}(1+o(1)).
    \]
   Therefore we get
   \[
   c(n)\le 3\sum_{i = 0}^{2n}f_{n,i}t_i \le 2.4951\frac{f(r)^n}{\sqrt{n}}\left(1+o(1)\right) = O\left(\frac{f(r)^n}{\sqrt{n}}\right).
   \]
   
   \item  If $n = 3s - 1$ for some integer $s > 0$, we now show that
     \begin{equation}
         c(n) \le 1.7529\frac{f(r)^n}{\sqrt{n}}\left(1+o(1)\right) = O\left(\frac{f(r)^n}{\sqrt{n}}\right).
     \end{equation}
    By Theorem \ref{optimum}, we get $t_i=1$ for $i=0,1,\dots,2s-4$, $t_{2s - 3} = \frac{4}{5}$, $t_{2s-2} = \frac{3}{5}$, $t_{2s-1}= \frac{2}{5}$, $t_{2s}=\frac{1}{5}$ and $t_i = 0$ for $i \ge 2s + 1$. Therefore we have
    \[
    \sum_{i = 0}^{2n}f_{n,i}t_i = \sum_{i = 0}^{2s-4}f_{n,i}+\frac{4}{5}f_{n,2s-3}+\frac{3}{5}f_{n,2s-2}+\frac{2}{5}f_{n,2s-1}+\frac{1}{5}f_{n,2s}.
    \]
    And by Corollary \ref{estimate}, we have
    \[
    \sum_{i=0}^{2n}f_{n,i}t_i=\frac{f(r)^n}{\sqrt{2\pi n}}\sqrt{\frac{1+r+r^2}{2\alpha-(1-\alpha)r}}\Big(\frac{1}{(1-r)r^{-4}}+\frac{4}{5r^{-3}}+\frac{3}{5r^{-2}}+\frac{2}{5r^{-1}}+\frac{1}{5}\Big)\big(1 + o(1)\big)
    \]
    \[\approx 0.5843\frac{f(r)^n}{\sqrt{n}}\big(1 + o(1)\big).
    \]
   Therefore we get
   \[
   c(n)\le 3\sum_{i = 0}^{2n}f_{n,i}t_i \le 1.7529\frac{f(r)^n}{\sqrt{n}}\left(1+o(1)\right) = O\left(\frac{f(r)^n}{\sqrt{n}}\right).
   \]
   
   \item If $n = 3s - 2$ for some integer $s > 0$, we now show that
     \begin{equation}
         c(n) \le 1.2288\frac{f(r)^n}{\sqrt{n}}\left(1+o(1)\right) = O\left(\frac{f(r)^n}{\sqrt{n}}\right).
     \end{equation}
     By Theorem \ref{optimum}, we get $t_i = 1$ for $i = 0, 1, \cdots, 2s - 4$, $t_{2s - 3} = \frac{3}{4}$, $t_{2s-2} = \frac{2}{4}$, $t_{2s-1}= \frac{1}{4}$, and $t_i = 0$ for $i \ge 2s$. Therefore we have
    \[
    \sum_{i = 0}^{2n}f_{n,i}t_i = \sum_{i = 0}^{2s-4}f_{n,i}+\frac{3}{4}f_{n,2s-3}+\frac{2}{4}f_{n,2s-2}+\frac{1}{4}f_{n,2s-1}.
    \]
    And by Corollary \ref{estimate}, we have
    \[
    \sum_{i=0}^{2n}f_{n,i}t_i=\frac{f(r)^n}{\sqrt{2\pi n}}\sqrt{\frac{1+r+r^2}{2\alpha-(1-\alpha)r}}\Big(\frac{1}{(1-r)r^{-4}}+\frac{3}{4r^{-3}}+\frac{2}{4r^{-2}}+\frac{1}{4r^{-1}}\Big)\big(1 + o(1)\big)
    \]
    \[\approx 0.4096\frac{f(r)^n}{\sqrt{n}}\big(1 + o(1)\big).
    \]
   Therefore we get
   \[
   c(n)\le 3\sum_{i = 0}^{2n}f_{n,i}t_i \le 1.2288\frac{f(r)^n}{\sqrt{n}}\left(1+o(1)\right) = O\left(\frac{f(r)^n}{\sqrt{n}}\right).
   \]
\end{enumerate}

\section{Acknowledgement}
 I would like to thank Harm Derksen, for introducing me to this topic and for all of his support and discussion.

\end{document}